\newtheorem{theorem}{Theorem}[section]
\newtheorem{lemma}[theorem]{Lemma}
\newtheorem{corollary}[theorem]{Corollary}
\newtheorem{prop}[theorem]{Proposition}
\newtheorem{question}[theorem]{Question}
\theoremstyle{definition}
\newtheorem{definition}[theorem]{Definition}
\theoremstyle{remark}
\newtheorem{remark}[theorem]{Remark}
\numberwithin{equation}{section}
\newcommand{\db}{\bar{\partial}}
\newcommand{\dd}{\partial}
\newcommand{\nablaB}{\nabla^{\text{\rm B}}}
\newcommand{\Ric}{\text{\rm Ric}}
\newcommand{\tr}{\text{\rm tr}}
\newcommand{\di}{\text{\rm div}}
\newcommand{\R}{\text{\rm Rm}}
\newcommand{\n}{\nabla}
\newcommand{\nb}{\overline{\nabla}}
\newcommand{\D}{\Delta}
\newcommand{\Db}{\overline{\Delta}}
\title{Bismut Einstein Metrics on Compact Complex Manifolds}
\author{Yanan Ye}
\email{\href{mailto:yeyanan@outlook.com}{yeyanan@outlook.com}}
\date{}
\begin{document}
\maketitle

\begin{abstract}
We observe that, for a Bismut Einstein metric, the (2,0)-part of Bismut Ricci form is an eigenvector of the Chern Laplacian.
With the help of this observation, we prove that a Bismut Einstein metric with non-zero Einstein constant is K{\"a}hler Einstein.
Additionally, for Bismut Einstein metrics with zero Einstein constant, we prove that they are actually Bismut Ricci flat.
%To prove those results, we establish a Bochner formula and obtain some vanishing results under certain positive conditions for Bismut Ricci curvature.
\end{abstract}

\tableofcontents

\section{Introduction}
The pluriclosed flow introduced by Streets and Tian\cite{S-Tpcf,S-Thcf} is a parabolic flow, which evolves a pluriclosed metric by its Bismut Ricci curvature.
As Ricci flow and K{\"a}hler-Ricci flow, a Einstein-type metric (if exists) can generate a solution to the flow by a rescaling depended only on time.
This is the simplest case of solitons (see e.g., \cite{M-Tperelman,Perelman1,T-Zkrs}).
So understanding the property of Bismut Einstein metrics is helpful to investigate the behavior of pluriclosed flow, which can be applied to classify some manifolds, especially the Kodaira\rq{}s class VII surfaces (see \cite{MR4181011,MR2755684}).

Firstly, we give the definition of Bismut Einstein metrics.
\begin{definition}
A pluriclosed metric $\omega$ is called Bismut Einstein with Einstein constant $\lambda$ if
\begin{align}\label{eq-defBE}
\rho^{1,1}(\omega)=\lambda\omega.
\end{align}
\end{definition}
Here $\rho^{1,1}$ is the $(1,1)$-part of Bismut Ricci form that will be defined explicitly in Section \ref{subsec-Bismut}.
Since $\rho^{1,1}$ is not elliptic in general, we add pluriclosed condition in the definition.
But notice that in the case of $\lambda\neq0$, a solution to \eqref{eq-defBE} is automatically pluriclosed for $\rho^{1,1}$ is always pluriclosed.
To see this, we rewrite the Bismut Ricci form as $\rho=\rho^{1,1}+\rho^{2,0}+\rho^{0,2}$ by bi-degree.
Since $\rho$ is real and closed, we obtain
\begin{align*}
\dd\rho^{2,0}=0
,\qquad
\dd\rho^{1,1}+\db\rho^{2,0}=0.
\end{align*}

\begin{definition}
We say a manifold admits a Hermitian-symplectic (HS) structure if it admits a HS form, which is a real closed 2-form with positive definite (1,1)-part.
\end{definition}
K{\"a}hler metrics are special examples of HS forms.
And the (1,1)-part of any HS form is precisely pluriclosed.
In fact, a Bismut Einstein metric with $\lambda\neq0$ can be extended to a HS form as the (1,1)-part (see \cite{Ypcfhs}).

One motivation of this paper is a question asked by Streets and Tian in \cite{S-Tpcf}.
Using the classification of compact surfaces (see e.g., \cite{BHPA}), they show that only K{\"a}hler surfaces admit HS structures in dimension 2.
And they ask is it valid in high dimensional cases?
\begin{question}[Streets \& Tian in \cite{S-Tpcf}]\label{ques-HS}
Is there a non-K{\"a}hler manifold admitting HS structures?
\end{question}
Authors of \cite{Ypcfhs} find that HS forms are preserved by pluriclosed flow.
They deform a HS form along pluriclosed flow and prove that the limitation (if exists) must be K{\"a}hler in dimension 2.
This gives a way to consider Question \ref{ques-HS}.
One can study the canonical HS forms obtained by deforming HS forms along pluriclosed flow.
In this viewpoint, Bismut Einstein metric with $\lambda\neq0$ is an important class of canonical HS forms since it is the fixed point of pluriclosed flow up to a rescaling.
Then a natural question is that whether Bismut Einstein metrics with $\lambda\neq0$ are K{\"a}hler?
A classification of solitons given by Streets in \cite{Sscs} gives an affirmative answer in dimension 2. 
For high dimensional cases, this paper also gives an affirmative answer.

More precisely, we prove that

\begin{theorem}\label{the-introMain}
On compact complex manifolds, we have

(a) Bismut Einstein metrics with non-zero Einstein constant are K{\"a}hler Einstein;

(b) Bismut Einstein metrics with zero Einstein constant are Bismut Ricci flat.
\end{theorem} 
In the case of $\lambda=0$, there are some non-K{\"a}hler examples (see Example 2.7-Example 2.10 in \cite{GF-J-SLap}).
And all those examples are actually Bismut flat.

To prove Theorem \ref{the-introMain}, we establish a Bochner formula expressed in terms of Bismut Ricci curvature.
Meanwhile, we obtain some vanishing results under certain conditions on Bismut Ricci curvature and Bismut scalar curvature (see Section \ref{sec-Bochner}). 
More analogues vanishing theorems about Bismut connection can be found in \cite{Alexandrov-Ivanov,Ivanov-P}.
%\begin{theorem}\label{thm-vanGeneral}
%Given a compact complex manifold with a pluriclosed metric $(M^{2n},J,\omega)$.
%
%(a) If $\rho^{1,1}$ is strictly positive definite, then $H_{\db}^{p,0}(M;\mathbb{C})$ is trivial for $1\leq p \leq n$;
%
%(b) If Bismut scalar curvature $r=\tr_{\omega}\rho^{1,1}$ is positive, then $H_{\db}^{n,0}(M;\mathbb{C})$ is trivial.
%\end{theorem}

%\begin{theorem}\label{thm-introMainD2}
%On surfaces a Bismut Einstein metric with zero Einstein constant is actually Bismut Ricci flat.
%\end{theorem}

Here is an outline of the rest paper.
In section \ref{sec-Pre}, we recall some basic notions that will be used later.
In section \ref{sec-compLap}, we collect some calculation results about Chern Laplacian and $\db$-Laplacian, which are different in the non-K{\"a}hler case.
In section \ref{sec-Bochner}, we establish a Bochner formula expressed in terms of Bismut Ricci curvature and obtain some vanishing results of Dolbeault cohomology.
In section \ref{sec-BEgen}, we give an observation about Bismut Einstein metrics and use it to complete the proof of Theorem \ref{the-introMain}.
In section \ref{sec-proofMain}, we obtain a stronger vanishing theorem on surfaces.
As an application, we give another proof of Theorem \ref{the-introMain} in the surface case.
\newline

\textbf{Acknowledgments.} I want to express my gratitude to my advisor, Professor Gang Tian, for his helpful suggestions and patient guidance.
I also thanks Professor Jeffery Streets and Professor Mario Garcia-Fernandez for helpful comments and notifying me the paper \cite{GF-J-SLap}.
And thanks Professor Stefan Ivanov and Giuseppe Barbaro for their helpful comments on an earlier version.

\section{Preliminary}\label{sec-Pre}
In this section, we give a quick review of some basic notions which will be used later.

\subsection{Bismut connection and Bismut Ricci form}\label{subsec-Bismut}
Given a complex manifold $(M^{2n},J)$ with a Hermitian metric $g$.
Bismut connection $\nablaB$ is the unique connection satisfying
\begin{align*}
\nablaB g=0,\qquad \nablaB J=0, \qquad B(x,y,z)+B(z,y,x)=0,
\end{align*}
in which
\begin{align*}
B(x,y,z)=g(\nablaB_{x}y-\nablaB_{y}x-[x,y],z)
\end{align*}
is the tensor induced by torsion operator (see e.g., \cite{B-bc}).
Notice that $B$ is a real $3$-form, which is closed if and only if the metric is pluriclosed.

We denote $\rho$ the Ricci form of Bismut connection.
It is well know that $\rho$ is a closed real $2$ form (see e.g., \cite{C-cc}).
If we rewrite is as $\rho=\rho^{1,1}+\rho^{2,0}+\overline{\rho^{2,0}}$ by bidegree, then we have
\begin{align*}
\rho^{1,1}(\omega)
&=-\dd\dd^*\omega-\db\db^*\omega-\sqrt{-1}\dd\db\log\det g
\\
\rho^{2,0}(\omega)
&=-\dd\db^*\omega
\end{align*}
where $\omega(\cdot,\cdot)=g(J\cdot,\cdot)$ is the fundamental form.

\subsection{Chern connection without K{\"a}hler assumption}
\label{sec-Chern}
In this subsection we review some basic facts of Chern connection $\nabla$ without K{\"a}hler assumption.
In such a case, it is not Levi-Civita connection anymore.
And the torsion tensor is
\begin{align*}
T(x,y,z)=g(\nabla_x y-\nabla_y x-[x,y],z).
\end{align*}
In local coordinates, the Christoffel symbol of Chern connection is
\begin{align*}
\Gamma_{ij}^{s}=g^{\bar{t}s}\dd_{i}g_{j\bar{t}}
\end{align*}
Then
\begin{align*}
T_{ij\bar{t}}=	T(\dd_i,\dd_j,\dd_{\bar{t}})
=T_{ij}^{s}g_{s\bar{t}}
=\dd_{i}g_{j\bar{t}}-\dd_{j}g_{i\bar{t}}.
\end{align*}
Notice that
\begin{align*}
\dd\omega=\sqrt{-1}\dd_{i}g_{j\bar{t}}
					dz^i\wedge dz^j\wedge d\bar{z}^t
=\frac{\sqrt{-1}}{2}T_{ij\bar{t}}
					dz^i\wedge dz^j\wedge d\bar{z}^t
\end{align*}
So if we regard $\dd\omega$ as just a tensor, then we have 
$\dd\omega=\sqrt{-1}T$.

\begin{definition}\label{def-SandQ}
We can define the second Chern Ricci $S$ and a quadratic term $Q$ respectively by
\begin{align*}
S_{i\bar{j}}=g^{\bar{q}p}\Omega_{p\bar{q}i\bar{j}}
,\quad
Q_{i\bar{j}}=g^{\bar{q}p}g^{\bar{t}s}T_{is\bar{q}}\overline{T_{jt\bar{p}}}
=g^{\bar{q}p}g^{\bar{t}s}T_{is\bar{q}}T_{\bar{j}\bar{t}p}
\end{align*}
where $\Omega_{i\bar{j}p\bar{q}}$ is the Chern curvature tensor.
\end{definition}

Recall that the Chern Ricci is defined by 
$
\Ric_{i\bar{j}}=g^{\bar{q}p}\Omega_{i\bar{j}p\bar{q}}
$.
In K{\"a}hler case, $S$ is precisely the Chern Ricci.
In general, they are different since Chern connection has torsion.
An basic fact is that $S$ is alway a second order elliptic operator with respect to metrics.

For pluriclosed metrics, there is a relationship between $S$ and the $(1,1)$-part of Bismut Ricci form $\rho^{1,1}$ (see e.g., \cite{S-Tpcf,S-Thcf}). 
More explicitly, if we assume
\begin{align*}
\rho^{1,1}=\sqrt{-1}P_{i\bar{j}}dz^{i}\wedge d\bar{z}^{j}
\end{align*}
then we have
\begin{align}\label{eq-P11}
P=S-Q.
\end{align}

\begin{remark}\label{re-HnonNeg}
In fact, the quadratic term $Q$ is non-negative. 
To see this, we can choose a normal coordinates such that $g_{i\bar{j}}(x)=\delta_{ij}$ and $Q(x)$ is diagonal at a fixed point $x$.
Then we have
\begin{align*}
Q_{i\bar{i}}(x)=\sum_{s,p}T_{is\bar{p}}T_{\bar{i}\bar{s}p}
=\sum_{s,p}|T_{is\bar{p}}|^{2}\geq0
\end{align*}
\end{remark}

\subsection{Lefschetz-type operator}
\label{sec-Lfs}
For the convenience of use later, we recall the Lefschetz-type operator in this subsection.

\begin{definition}
Let $\gamma$ be a form.
The Lefschetz-type operator $L_{\gamma}$ is defined by
\begin{align*}
L_{\gamma}\alpha=\gamma\wedge\alpha.
\end{align*}
And its conjugate adjoint $L^{*}_{\gamma}$ is defined by
\begin{align*}
(L^{*}_{\gamma}\alpha,\beta)=(\alpha,\bar{\gamma}\wedge \beta).
\end{align*}
where $(\cdot,\cdot)=g(\cdot,\overline{\cdot})$ is the pointwise Hermitian inner product.
\end{definition}

Notice that in the case of $\gamma=\omega$, those operators defined above are the classical Lefschetz operator $L$ and $\Lambda$.

We give a local expression of the Lefschetz-type operator in a special case which will be used later.
\begin{lemma}\label{lem-Lfs}
Given a $(p,1)$-form
\begin{align*}
\alpha=\frac{1}{p!}\alpha_{i_1\cdots i_p \bar{k}}
			dz^{i_1}\wedge\cdots\wedge dz^{i_p}
			\wedge d\bar{z}^k
,\quad
\alpha_{\cdots i_u\cdots i_v\cdots \bar{k}}+\alpha_{\cdots i_v\cdots i_u\cdots \bar{k}}=0
\end{align*}
and a $(1,0)$-form
$\gamma=\gamma_{s}dz^{s}$.
We have
\begin{align*}
L^{*}_{\gamma}\alpha=\frac{1}{p!}(-1)^p g^{\bar{k}l}
			\alpha_{i_1\cdots i_p\bar{k}}\gamma_{l}
			dz^{i_1}\wedge\cdots\wedge dz^{i_p}.
\end{align*}
\end{lemma}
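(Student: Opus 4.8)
The plan is to verify the claimed formula directly from the defining relation $(L^{*}_{\gamma}\alpha,\beta)=(\alpha,\bar{\gamma}\wedge\beta)$ by pairing against an arbitrary $(p,0)$-form. First I would record the bidegrees: since $\bar{\gamma}$ has type $(0,1)$ and $\alpha$ has type $(p,1)$, the test form $\beta$ in the definition must be of type $(p,0)$, so $L^{*}_{\gamma}\alpha$ is a $(p,0)$-form. Writing it as $L^{*}_{\gamma}\alpha=\frac{1}{p!}\,c_{i_1\cdots i_p}\,dz^{i_1}\wedge\cdots\wedge dz^{i_p}$ with coefficients antisymmetric in the $i$-indices, the goal becomes the identification $c_{i_1\cdots i_p}=(-1)^p g^{\bar{k}l}\alpha_{i_1\cdots i_p\bar{k}}\gamma_{l}$, after which the lemma follows.

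Next I would fix the conventions for the pointwise Hermitian inner product in local coordinates, namely $\langle dz^i,dz^j\rangle=g^{\bar{j}i}$ and $\langle d\bar{z}^k,d\bar{z}^m\rangle=g^{\bar{k}m}$, extended to $(p,q)$-forms with the combinatorial factor $\frac{1}{p!\,q!}$. Concretely, for two $(p,0)$-forms the product reads $\frac{1}{p!}\,\phi_{i_1\cdots i_p}\overline{\psi_{j_1\cdots j_p}}\,g^{\bar{j}_1 i_1}\cdots g^{\bar{j}_p i_p}$, and for two $(p,1)$-forms it carries one extra factor $g^{\bar{k}m}$ from the antiholomorphic slots. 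The one genuinely combinatorial point is the computation of $\bar{\gamma}\wedge\phi$ for a test $(p,0)$-form $\phi$: moving the single $d\bar{z}^s$ produced by $\bar{\gamma}=\overline{\gamma_s}\,d\bar{z}^s$ past the $p$ holomorphic one-forms of $\phi$ yields the sign $(-1)^p$, so that $\bar{\gamma}\wedge\phi$ has coefficients $(-1)^p\,\overline{\gamma_k}\,\phi_{i_1\cdots i_p}$ in the normalized $(p,1)$-basis (automatically antisymmetric in the $i$-indices since $\phi$ is). This reordering sign is exactly the $(-1)^p$ appearing in the statement, and it is where I expect to have to be most careful.

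Having both coefficient expressions in hand, I would expand $(\alpha,\bar{\gamma}\wedge\phi)$ using the $(p,1)$ inner product and $(L^{*}_{\gamma}\alpha,\phi)$ using the $(p,0)$ inner product. Both are linear in $\overline{\phi_{j_1\cdots j_p}}$ contracted against $p$ copies of the inverse metric $g^{\bar{j}i}$; equating them for all test forms $\phi$ and stripping off the nondegenerate metric factors forces $c_{i_1\cdots i_p}=(-1)^p g^{\bar{k}l}\alpha_{i_1\cdots i_p\bar{k}}\gamma_{l}$, which is the asserted formula. Alternatively, since every object involved is intrinsic, one may run the same computation at a point in a unitary coframe where $g_{i\bar{j}}=\delta_{ij}$, reducing the inner products to the Euclidean ones and then restoring the general $g^{\bar{k}l}$ by tensoriality; I would likely present the frame-independent version to avoid a separate invariance argument. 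No step presents a real obstacle beyond careful bookkeeping of the index contractions and the single reordering sign.
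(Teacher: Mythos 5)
Your proposal is correct and follows essentially the same route as the paper: test the defining relation $(L^{*}_{\gamma}\alpha,\beta)=(\alpha,\bar{\gamma}\wedge\beta)$ against an arbitrary $(p,0)$-form, track the $(-1)^p$ from commuting $d\bar{z}^l$ past the $p$ holomorphic one-forms, and strip off the nondegenerate metric contractions to read off the coefficients. The only differences are cosmetic choices of normalization, so nothing further is needed.
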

\begin{proof}
Choose an arbitrary $(p,0)$-form 
$\beta=\beta_{j_1\cdots j_p} dz^{j_1}\wedge\cdots\wedge dz^{j_p}$.
Assume
\begin{align*}
L^{*}_{\gamma}\alpha=\frac{1}{p!}
			\eta_{i_1\cdots i_p}
			dz^{i_1}\wedge\cdots\wedge dz^{i_p}.
\end{align*}
By direct computation, we obtain
\begin{align*}
(L^{*}_{\gamma}\alpha,\beta)
=g^{\bar{j_1}i_1}\cdots g^{\bar{j_p}i_p}
\eta_{i_1\cdots i_p}
\overline{\beta_{j_1\cdots j_p}}
\end{align*}
and
\begin{align*}
\bar{\gamma}\wedge\beta
=(-1)^p \beta_{j_1\cdots j_p}\overline{\gamma_{l}}
			dz^{j_1}\wedge\cdots\wedge dz^{j_p}
			\wedge d\bar{z}^l
\end{align*}
By definition,
\begin{align*}
(L^{*}_{\gamma}\alpha,\beta)
&=(\alpha,\bar{\gamma}\wedge\beta)
=(-1)^p g^{\bar{j_1}i_1}\cdots g^{\bar{j_p}i_p}g^{\bar{k}l}
					\alpha_{i_1\cdots i_p\bar{k}}\gamma_l
					\overline{\beta_{j_1\cdots j_p}}
\end{align*}
Then we get
\begin{align*}
g^{\bar{j_1}i_1}\cdots g^{\bar{j_p}i_p}
\eta_{i_1\cdots i_p}
\overline{\beta_{j_1\cdots j_p}}
=(-1)^p g^{\bar{j_1}i_1}\cdots g^{\bar{j_p}i_p}g^{\bar{k}l}
					\alpha_{i_1\cdots i_p\bar{k}}\gamma_l
					\overline{\beta_{j_1\cdots j_p}}
\end{align*}
which implies
\begin{align*}
\eta_{i_1\cdots i_p}
=(-1)^p g^{\bar{k}l}\alpha_{i_1\cdots i_p\bar{k}}\gamma_l
\end{align*}
\end{proof}	

\section{Some Calculation Results of Laplacians}\label{sec-compLap}
In this section, we collect some calculation results which will be used later.
For easy of notations, let us start with some definitions.

\begin{definition}
For a tensor $A$, we denote its gradient by
\begin{align*}
\n A=\n_{i}A_{\cdots}dz^{i}\otimes\cdots
,\quad
\nb A=\n_{\bar{i}}A_{\cdots}d\bar{z}^{i}\otimes\cdots
\end{align*}
We denote the Chern Laplacian by
\begin{align*}
\D=g^{\bar{q}p}\n_{p}\n_{\bar{q}}
,\quad
\Db=g^{\bar{q}p}\n_{\bar{q}}\n_{p}.
\end{align*}
We denote the $\dd$-Laplacian and $\db$-Laplacian by
\begin{align*}
\D_{\db}=\db^*\db+\db\db^*
,\qquad
\D_{\dd}=\dd^*\dd+\dd\dd^*.
\end{align*}
\end{definition}

\begin{definition}\label{def-circle}
Given two tensors $A_{\cdots i\bar{j}}$ and 
$B_{p_{1}\cdots p_{s}\bar{q}_{1}\cdots \bar{q}_{t}}$.
We can define a tensor $A\circ B$ by
\begin{align*}
(A\circ B)_{\cdots p_{1}\cdots p_{s}\bar{q}_{1}\cdots\bar{q}_{t} }
=&-\sum_{m=1}^{s}g^{\bar{\beta}\alpha}
	A_{\cdots p_{m}\bar{\beta}}B_{\cdots p_{m-1}\alpha p_{m+1}\cdots}
	\\
	&+\sum_{n=1}^{t}g^{\bar{\beta}\alpha}
	A_{\cdots \alpha \bar{q}_{n}}B_{\cdots \bar{q}_{n-1}\bar{\beta} \bar{q}_{n+1}\cdots}
\end{align*}
\end{definition}

\begin{remark}\label{re-secondOrderChern}
Using the notation defined above, we can rewrite the Chern curvature operator as
\begin{align*}
\n^{2}A(x,y)-\n^{2}A(y,x)
	=\Omega_{xy}\circ A=(\Omega\circ A)(x,y)
\end{align*}
Taking trace, we obtain
\begin{align*}
\D A-\Db A=S\circ A
\end{align*}
where $S$ is the second Chern Ricci defined in Section \ref{sec-Chern}.
\end{remark}

\begin{remark}\label{re-compS}
We would like to give a remark on the computation of $A\circ B$ in a special case where $B=B_{i_{1}\cdots i_{p}}\in (T^{*1,0}M)^{\otimes p}$ and $A=A_{i\bar{j}}$ is Hermitian, i.e., $\overline{A_{i\bar{j}}}=A_{j\bar{i}}$.
For convenience, we choose a normal coordinates for a fixed point $x$ such that
$g_{i\bar{j}}(x)=\delta_{ij}$ and
$A_{i\bar{j}}(x)=\lambda_{i}\delta_{ij}$.
Then by definition
\begin{align*}
(A\circ B)_{i_1\cdots i_p}(x)
=-(\sum_{m=1}^{p}\lambda_{i_m})B_{i_1\cdots i_p}.
\end{align*}
In particular, we have
\begin{align*}
g\circ B=-pB
\end{align*}
Moreover, if the subscripts of $B$ is skew-symmetric, then in the case of $p=n$ we have
\begin{align*}
(A\circ B)_{i_{1}\cdots i_{n}}(x)
=-(\sum_{m=1}^{n}\lambda_{m})B_{i_{1}\cdots i_{n}}
=-(\tr_{g}A)(x)B_{i_{1}\cdots i_{n}}
\end{align*}
Since $x$ is arbitrary, we get
\begin{align*}
A\circ B=-(\tr_{g}A)B.
\end{align*}
\end{remark}

We recall the formula of order exchange of Laplacian and gradient and leave the proof in the appendix.
\begin{lemma}\label{lem-exOf3derivative}
Let $A$ be a tensor field. 
We have
\begin{align*}
\n^{3}_{k\bar{k}\bar{l}}A-\n^{3}_{\bar{l}k\bar{k}}A
=\Omega_{k\bar{l}}\circ\n_{\bar{k}}A
	-\n_{k}T_{\bar{k}\bar{l}}^{\bar{s}}\n_{\bar{s}}A
	-T_{\bar{k}\bar{l}}^{\bar{s}}\n_{k}\n_{\bar{s}}A
\end{align*}
\end{lemma}

Then we give two lemmas often used in the computation of $\db$-Laplacian and $\dd$-Laplacian.
And we leave the tedious calculations in the appendix.

\begin{lemma}\label{lem-compDb}
Given a $(p,q+1)$-form
\begin{align*}
\alpha=\frac{1}{p!(q+1)!}
				\alpha_{i_1\cdots i_p \bar{j}_1\cdots \bar{j}_q \bar{k}}
				dz^{i_1}\wedge\cdots\wedge dz^{i_p}
				\wedge
				d\bar{z}^{j_1}\wedge\cdots\wedge d\bar{z}^{j_q}
				\wedge
				d\bar{z}^k
\end{align*}
satisfying
\begin{align*}
\alpha_{\cdots i_u \cdots i_v \cdots}
+\alpha_{\cdots i_v \cdots i_u \cdots}=0
,\quad
\alpha_{\cdots \bar{j}_u \cdots \bar{j}_v \cdots}
+\alpha_{\cdots \bar{j}_v \cdots \bar{j}_u \cdots}=0.
\end{align*}
We have
\begin{align*}
\db^*\alpha=\frac{1}{p!q!}
				\eta_{i_1\cdots i_p \bar{j}_1\cdots \bar{j}_q}
				dz^{i_1}\wedge\cdots\wedge dz^{i_p}
				\wedge
				d\bar{z}^{j_1}\wedge\cdots\wedge d\bar{z}^{j_q}
\end{align*}
in which
\begin{align*}
\eta_{i_1\cdots i_p \bar{j}_1\cdots \bar{j}_q}
=(-1)^{p+q+1}g^{\bar{k}l}
			\Big(
			\nabla_{l}\alpha_{i_1\cdots i_p \bar{j}_1\cdots \bar{j}_q \bar{k}}
+\alpha_{i_1\cdots i_p \bar{j}_1\cdots \bar{j}_q \bar{k}}
			T_{ls}^{s}
+\frac{1}{2}g^{\bar{t}s}
			\sum_{m=1}^{q}
			\alpha_{i_1\cdots i_p \bar{j}_1 \cdots\bar{t}\cdots\bar{j}_q \bar{k}}
			T_{sl\bar{j}_m}
			\Big).
\end{align*}
\end{lemma}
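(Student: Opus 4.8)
The plan is to compute $\db^*$ as the formal $L^2$-adjoint of $\db$: fix an arbitrary smooth compactly supported $(p,q)$-form $\beta$ and use the defining identity $\int_M(\db^*\alpha,\beta)\,dV=\int_M(\alpha,\db\beta)\,dV$. It then suffices to rewrite the right-hand side until the single derivative lands on $\alpha$, and to read off the coefficient $\eta$ from the arbitrariness of $\beta$.

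First I would express $\db\beta$ through the Chern connection $\nabla$. Because $\nabla$ preserves the bidegree and is metric, the $(p,q+1)$-component $\db\beta$ is obtained by antisymmetrizing $\nabla_{\bar{k}}\beta$ over the antiholomorphic indices; since $\nabla$ has torsion, this antisymmetrization differs from the coordinate one by a correction governed by the $(0,2)$-torsion $\overline{H}$. Concretely, for one barred slot one checks $\dd_{\bar k}\beta_{\bar j}-\dd_{\bar j}\beta_{\bar k}=\nabla_{\bar k}\beta_{\bar j}-\nabla_{\bar j}\beta_{\bar k}+(\Gamma_{\bar k\bar j}^{\bar l}-\Gamma_{\bar j\bar k}^{\bar l})\beta_{\bar l}$, the last being the $\overline{H}$-correction. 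Isolating this correction is the key structural input: after conjugation and raising indices in the inner product it is precisely what generates the final term $\frac{1}{2}g^{\bar t s}\sum_{m}\alpha_{\cdots\bar t\cdots}H_{sl\bar{j}_m}$ in the statement.

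Next I would pair $\alpha$ with $\db\beta$ pointwise, contract all indices with the metric, and integrate by parts to move $\nabla_{\bar k}$ off of $\beta$ and onto $\alpha$ (contracting with $g^{\bar k l}$ to produce $\nabla_l$). The crucial non-K{\"a}hler subtlety is that the Chern connection is not torsion-free, so the divergence theorem acquires a correction: using $\Gamma_{ij}^j=\dd_i\log\det g$ one has $\nabla_i V^i=\di_{dV}V-V^i H_{ij}^j$, whence $\int_M\nabla_i V^i\,dV=-\int_M V^i H_{ij}^j\,dV$. The antiholomorphic analogue of this identity is exactly what produces the middle term $\alpha_{\cdots\bar k}H_{ls}^s$. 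Collecting the three contributions — the plain contracted derivative $\nabla_l\alpha$, the torsion-trace boundary correction, and the $\overline{H}$-correction coming from $\db\beta$ — and matching the normalizing factorials $\frac{1}{p!(q+1)!}\to\frac{1}{p!q!}$ together with the sign $(-1)^{p+q+1}$ yields the claimed $\eta$.

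The main obstacle I expect is bookkeeping rather than conceptual: correctly tracking the antisymmetrization conventions so that the combinatorial factor $\tfrac12$ and the summation $\sum_{m=1}^{q}$ in the torsion term emerge, keeping the signs consistent when an extra $d\bar z$ is inserted in front of $p$ holomorphic factors (the source of $(-1)^{p+q+1}$), and verifying the non-K{\"a}hler integration-by-parts identity that contributes the trace term $H_{ls}^s$. As a consistency check, specializing to $H\equiv0$ recovers the familiar K{\"a}hler formula $\db^*\alpha=(-1)^{p+q+1}g^{\bar k l}\nabla_l\alpha_{\cdots\bar k}$, pinning down the signs and factors.
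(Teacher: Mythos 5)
Your proposal is correct and follows essentially the same route as the paper: both compute $\db^*$ as the formal adjoint against a compactly supported test form, integrate by parts, and identify the two torsion corrections, arriving at the same $\eta$ with the same sign and combinatorics. The only difference is organizational --- you work covariantly throughout, sourcing the trace term $H_{ls}^{s}$ from the torsion-corrected divergence theorem and the $\tfrac12 H_{sl\bar{j}_m}$ terms from the discrepancy between $\db$ and the antisymmetrized $\overline{\nabla}$, whereas the paper differentiates all the metric factors in coordinates and only reassembles the covariant derivative and torsion terms at the end.
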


Similarly, we have
\begin{lemma}\label{lem-compDd}
Given a $(p+1,q)$-form
\begin{align*}
\alpha=\frac{1}{(p+1)!q!}
				\alpha_{l i_1\cdots i_p \bar{j}_1\cdots \bar{j}_q}
				dz^{l}\wedge
				dz^{i_1}\wedge\cdots\wedge dz^{i_p}
				\wedge
				d\bar{z}^{j_1}\wedge\cdots\wedge d\bar{z}^{j_q}
\end{align*}
satisfying
\begin{align*}
\alpha_{\cdots i_u \cdots i_v \cdots}
+\alpha_{\cdots i_v \cdots i_u \cdots}=0
,\quad
\alpha_{\cdots \bar{j}_u \cdots \bar{j}_v \cdots}
+\alpha_{\cdots \bar{j}_v \cdots \bar{j}_u \cdots}=0.
\end{align*}
We have
\begin{align*}
\dd^*\alpha=\frac{1}{p!q!}
				\eta_{i_1\cdots i_p \bar{j}_1\cdots \bar{j}_q}
				dz^{i_1}\wedge\cdots\wedge dz^{i_p}
				\wedge
				d\bar{z}^{j_1}\wedge\cdots\wedge d\bar{z}^{j_q}
\end{align*}
in which
\begin{align*}
\eta_{i_1\cdots i_p \bar{j}_1\cdots \bar{j}_q}
=-g^{\bar{k}l}
			\Big(
			\nabla_{\bar{k}}
			\alpha_{l i_1\cdots i_p \bar{j}_1\cdots \bar{j}_q}
+\alpha_{l i_1\cdots i_p \bar{j}_1\cdots \bar{j}_q}
			\overline{T_{ks}^{s}}
-\frac{1}{2}g^{\bar{t}s}
			\sum_{m=1}^{q}
			\alpha_{l i_1\cdots s \cdots i_p \bar{j}_1 \cdots\bar{j}_q}
			\overline{T_{kt\bar{i}_m}}
			\Big).
\end{align*}
\end{lemma}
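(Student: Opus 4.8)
The plan is to run the same adjointness-plus-integration-by-parts argument used for Lemma~\ref{lem-compDb}, with the holomorphic and antiholomorphic directions interchanged and the torsion conjugated. Since it suffices to test against compactly supported forms, I would work in a single coordinate neighborhood and pair $\dd^{*}\alpha$ against an arbitrary $(p,q)$-form
\begin{align*}
\beta=\beta_{a_1\cdots a_p\bar{b}_1\cdots\bar{b}_q}
dz^{a_1}\wedge\cdots\wedge dz^{a_p}
\wedge d\bar{z}^{b_1}\wedge\cdots\wedge d\bar{z}^{b_q},
\end{align*}
using the defining relation $(\dd^{*}\alpha,\beta)_2=(\alpha,\dd\beta)_2$. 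The first point to record is that $\dd\beta$ places the new holomorphic covector $dz^{a}$ in the \emph{leading} slot, matching the position of the index $l$ in $\alpha_{l i_1\cdots i_p\bar{j}_1\cdots\bar{j}_q}$; consequently no reordering sign is produced, and the overall prefactor will be a clean $-1$ rather than the $(-1)^{p+q+1}$ of Lemma~\ref{lem-compDb} (there the extra index sat in the last slot, costing an additional $(-1)^{p+q}$).

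Next I would expand $(\alpha,\dd\beta)$ in coordinates and integrate by parts. Because $\overline{\dd_{a}\beta}=\dd_{\bar a}\overline{\beta}$, the derivative that lands on the metric coefficients, the inverse metrics and $\det g$ is the \emph{antiholomorphic} one $\dd_{\bar a}$; after contracting with $g^{\bar{k}l}$ it reassembles into the antiholomorphic Chern covariant derivative $\nabla_{\bar k}$ acting on $\alpha_{l i_1\cdots i_p\bar{j}_1\cdots\bar{j}_q}$, which is the leading term of $\eta$. This is exactly dual to the appearance of $\nabla_{l}$ in Lemma~\ref{lem-compDb}.

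The torsion contributions are then identified as before, but in conjugated form. Differentiating $g^{\bar{k}l}$ together with $\det g$ produces the trace term $\alpha_{l i_1\cdots i_p\bar{j}_1\cdots\bar{j}_q}\,\overline{H_{ks}^{s}}$, the conjugate of the $H_{ls}^{s}$ term of Lemma~\ref{lem-compDb}; the conjugation arises precisely because we differentiate antiholomorphically, so that the conjugate Christoffel symbols $\overline{\Gamma^{s}_{ij}}$ and the conjugate torsion $\overline{H_{ij\bar t}}=\dd_{\bar i}\overline{g_{j\bar t}}-\dd_{\bar j}\overline{g_{i\bar t}}$ enter. The derivatives of the metrics that lower the holomorphic indices $i_1,\dots,i_p$ give, after antisymmetrizing the antiholomorphic metric derivatives into $\overline{H}$ (the analogue of the ``Notice that'' step of Lemma~\ref{lem-compDb}), the sum $-\tfrac12 g^{\bar{t}s}\sum_{m}\alpha_{l i_1\cdots s\cdots i_p\bar{j}_1\cdots\bar{j}_q}\,\overline{H_{kt\bar{i}_m}}$, the summation running over the holomorphic slots $i_1,\dots,i_p$ in parallel with the antiholomorphic sum of Lemma~\ref{lem-compDb}. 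Invoking the arbitrariness of $\beta$ then yields the claimed expression for $\eta$.

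The computation is routine but sign- and conjugation-sensitive, so the main obstacle is bookkeeping: correctly tracking which factor of $(-1)$ survives (the leading-slot placement of $l$), and verifying that the antiholomorphic metric-derivative combination antisymmetrizes into $\overline{H}$ with the correct $-\tfrac12$ coefficient, whose internal sign is opposite to that of Lemma~\ref{lem-compDb}. Everything else is a mechanical transcription of Lemma~\ref{lem-compDb} under the holomorphic/antiholomorphic swap.
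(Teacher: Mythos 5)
Your plan is exactly the argument the paper intends: the paper gives no separate proof of Lemma~\ref{lem-compDd}, stating only ``Similarly,'' and your conjugated rerun of the Lemma~\ref{lem-compDb} computation correctly identifies the two points where the transcription is not literal (the leading-slot placement of the new index $l$, which reduces the prefactor to $-1$, and the fact that the surviving metric-derivative terms now attach to the holomorphic slots $i_1,\dots,i_p$ and antisymmetrize into $\overline{H}$ with the opposite internal sign). This is correct and essentially the same approach; note in passing that your reading also fixes the apparent typo $\sum_{m=1}^{q}$ in the stated formula, which should run over the $p$ holomorphic indices.
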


\begin{remark}
In Lemma \ref{lem-compDb}, when $q=0$, we do not have the last term in the expression of 
$\eta_{i_1\cdots i_p \bar{j}_1\cdots \bar{j}_q}$.
In other words, the expression for $(p,1)$-form is
\begin{align*}
\eta_{i_1\cdots i_p}
=(-1)^{p+1}g^{\bar{k}l}
			\Big(
			\nabla_{l}\alpha_{i_1\cdots i_p \bar{k}}
+\alpha_{i_1\cdots i_p \bar{k}}
			T_{ls}^{s}
			\Big).
\end{align*}
This is also valid for Lemma \ref{lem-compDd} in the case of $p=0$.
\end{remark}

\begin{remark}\label{rm-dbOmega}
Applying Lemma \ref{lem-compDb} to $\omega$, we have
\begin{align*}
\db^*\omega=\sqrt{-1}T_{is}^{s}dz^i.
\end{align*}
\end{remark}

\subsection{Laplacians applied to functions}
Next proposition gives the relationship between two Laplacians when applied to functions.

\begin{prop}\label{prop-LapOnf}
For a smooth function $f$, we have
\begin{align*}
-\Delta_{\db}f=\Delta f -\sqrt{-1}(\db f,\dd^*\omega)\\
-\Delta_{\dd}f=\Delta f +\sqrt{-1}(\dd f,\db^*\omega)
\end{align*}
\end{prop}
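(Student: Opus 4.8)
The plan is to reduce both $\Delta_{\db}f$ and $\Delta_{\dd}f$ on a function to the Chern Laplacian $\Delta f$ plus a first-order torsion correction, by applying the codifferential formulas already established in Lemmas \ref{lem-compDb} and \ref{lem-compDd}. First I would observe that for degree reasons $\db^*f=0$ and $\dd^*f=0$, since a codifferential cannot lower the relevant degree below zero; hence $\Delta_{\db}f=\db^*\db f$ and $\Delta_{\dd}f=\dd^*\dd f$, so everything comes down to computing $\db^*$ of the $(0,1)$-form $\db f$ and $\dd^*$ of the $(1,0)$-form $\dd f$.

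For the first identity I would apply the $p=q=0$ case of Lemma \ref{lem-compDb} to $\alpha=\db f$, which gives
\begin{align*}
\db^*\db f=-g^{\bar k l}\big(\nabla_l(\db f)_{\bar k}+(\db f)_{\bar k}H_{ls}^{s}\big).
\end{align*}
The leading term is $-g^{\bar k l}\nabla_l(\db f)_{\bar k}=-g^{\bar k l}\dd_{l\bar k}f=-\Delta f$, where the Christoffel correction drops out because the Chern Christoffel symbols carry only holomorphic lower indices, so that $\nabla_l$ acting on the antiholomorphic index $\bar k$ reduces to the coordinate derivative. For the remaining term I would invoke Remark \ref{rm-dbOmega}, namely $\db^*\omega=\sqrt{-1}H_{is}^{s}dz^i$, to recognize $-g^{\bar k l}(\db f)_{\bar k}H_{ls}^{s}$ as $\sqrt{-1}(\db f,\dd^*\omega)$; here one passes from the coordinate contraction against $H_{ls}^{s}$ to the pointwise inner product against $\dd^*\omega=\overline{\db^*\omega}$. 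Collecting the two pieces yields $-\Delta_{\db}f=\Delta f-\sqrt{-1}(\db f,\dd^*\omega)$.

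The second identity follows the same template with Lemma \ref{lem-compDd} applied to $\dd f$, and it may also be obtained from the first by complex conjugation, exchanging the roles of $\dd$ and $\db$ and of $\db^*\omega$ and $\dd^*\omega$; the net effect is a single change of sign in the torsion term, giving $-\Delta_{\dd}f=\Delta f+\sqrt{-1}(\dd f,\db^*\omega)$. I expect the only real obstacle to be the sign bookkeeping: one must track the factors of $\sqrt{-1}$ carefully when rewriting the torsion contraction $g^{\bar k l}(\db f)_{\bar k}H_{ls}^{s}$ as an inner product, since $\db^*\omega$ and $\dd^*\omega$ are conjugate forms of opposite bidegree, and one must confirm that the Christoffel terms indeed cancel in the leading contraction. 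Everything else is a direct substitution into the two lemmas.
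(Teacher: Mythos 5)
Your proposal is correct and follows essentially the same route as the paper: reduce $\Delta_{\db}f$ to $\db^*\db f$, apply the $p=q=0$ case of Lemma \ref{lem-compDb} (resp.\ Lemma \ref{lem-compDd}) to $\db f$ (resp.\ $\dd f$), identify the leading contraction with $-\Delta f$, and convert the torsion term via Remark \ref{rm-dbOmega} into $\sqrt{-1}(\db f,\dd^*\omega)$ (resp.\ $-\sqrt{-1}(\dd f,\db^*\omega)$). The signs and the conjugation shortcut for the second identity both check out.
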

\begin{proof}
For Chern Laplacian, we have
\begin{align*}
\Delta f=\tr_{\omega}(\sqrt{-1}\dd\db f)
=\tr_{\omega}(\sqrt{-1}\dd_{i\bar{j}}f dz^i\wedge d\bar{z}^j)
=g^{\bar{q}p}\dd_{p\bar{q}}f
\end{align*}
For $\db$-Laplacian, we have
\begin{align*}
\Delta_{\db}f=\db\db^*f+\db^*\db f
=\db^*\db f
\end{align*}
Applying Lemma \ref{lem-compDb} to $\db f$ and noticing Remark \ref{rm-dbOmega}, we get
\begin{align*}
\Delta_{\db}f
&=-g^{\bar{k}l}\nabla_{l}(\db f)_{\bar{k}}
			-g^{\bar{k}l}(\db f)_{\bar{k}}T_{ls}^{s}\\
&=-g^{\bar{k}l}\dd_{l\bar{k}} f
			+\sqrt{-1}g^{\bar{k}l}(\db f)_{\bar{k}}(\db^*\omega)_l\\
&=-\Delta f +\sqrt{-1}(\db f,\dd^*\omega)
\end{align*}
Similarly, applying Lemma \ref{lem-compDd}, we get
\begin{align*}
\Delta_{\dd}f
&=-g^{\bar{k}l}\nabla_{\bar{k}}(\dd f)_{l}
			-g^{\bar{k}l}(\dd f)_{l}\overline{T_{ks}^{s}}\\
&=-g^{\bar{k}l}\dd_{l\bar{k}} f
			-\sqrt{-1}g^{\bar{k}l}(\dd f)_{l}(\dd^*\omega)_{\bar{k}}\\
&=-\Delta f -\sqrt{-1}(\dd f,\db^*\omega)
\end{align*}
\end{proof}

\begin{remark}
From Proposition \ref{prop-LapOnf}, we know that if $\db^*\omega=0$, then
$-\Delta_{\db}f=-\Delta_{\dd}f=\Delta f$ for all functions.
Notice that
\begin{align*}
\db^*\omega=0 \iff -*\dd*\omega=0 \iff \dd\omega^{n-1}=0
\end{align*}
So Chern Laplacian and $\db$-Laplacian are the same when applied to functions if and only if the metric is balanced, i.e., $d\omega^{n-1}=0$,
\end{remark}

Recall that
\begin{align*}
\int_{M} \Delta_{\db}f dV =(\Delta_{\db}f,1)_2
=(\db^*\db f,1)_2=(\db f,\db 1)_2=0 
\end{align*}
where $(\cdot,\cdot)_2$ is the $L^{2}$ Hermitian inner product.
But for Chern Laplacian, this is not valid in general.
Actually, we have 
\begin{prop}\label{prop-GauduchonIntLap=0}
If and only if the metric is Gauduchon (i.e., $\dd\db\omega^{n-1}=0$), we have
\begin{align*}
\int_{M}\Delta fdV=0
\end{align*}
for all functions.
\end{prop}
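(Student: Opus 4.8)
The plan is to pass from the analytic Chern Laplacian to a wedge-product expression at the level of differential forms, and then integrate by parts twice so that every derivative lands on $\omega^{n-1}$, at which point the Gauduchon hypothesis applies directly.

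First I would recall $\Delta f=\tr_{\omega}(\sqrt{-1}\dd\db f)$ together with the pointwise identity $\tr_{\omega}\alpha\cdot\frac{\omega^{n}}{n!}=\alpha\wedge\frac{\omega^{n-1}}{(n-1)!}$, valid for any $(1,1)$-form $\alpha$. Applying this with $\alpha=\sqrt{-1}\dd\db f$ and using $dV=\omega^{n}/n!$ gives
\begin{align*}
\int_{M}\Delta f\,dV=\frac{1}{(n-1)!}\int_{M}\sqrt{-1}\dd\db f\wedge\omega^{n-1}.
\end{align*}

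Next I would integrate by parts. Since $\db\db f=0$, Stokes applied to $d\big(\sqrt{-1}\db f\wedge\omega^{n-1}\big)$ yields $\int_{M}\sqrt{-1}\dd\db f\wedge\omega^{n-1}=\int_{M}\sqrt{-1}\db f\wedge\dd\omega^{n-1}$, where only the $(n,n-1)$ piece $\dd\omega^{n-1}$ of $d\omega^{n-1}$ survives against the $(0,1)$-form $\db f$. A second application of Stokes to $d\big(\sqrt{-1}f\,\dd\omega^{n-1}\big)$, using $d\dd\omega^{n-1}=\db\dd\omega^{n-1}=-\dd\db\omega^{n-1}$ and discarding the vanishing $(n+1,n-1)$ piece $\dd f\wedge\dd\omega^{n-1}$, gives $\int_{M}\sqrt{-1}\db f\wedge\dd\omega^{n-1}=\int_{M}\sqrt{-1}f\,\dd\db\omega^{n-1}$. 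By the Gauduchon condition $\dd\db\omega^{n-1}=0$ the right-hand side vanishes, so $\int_{M}\Delta f\,dV=0$.

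The argument is essentially bookkeeping, so I do not expect a genuine obstacle; the only points to watch are the bidegree selection at each step (which forces the ``wrong-type'' pieces of $d\omega^{n-1}$ to drop out) and the Leibniz signs, since $\db f$ and $\dd\omega^{n-1}$ both have odd total degree. As an alternative I would note that Proposition \ref{prop-LapOnf} gives $-\Delta_{\db}f=\Delta f-\sqrt{-1}(\db f,\dd^{*}\omega)$; combined with $\int_{M}\Delta_{\db}f\,dV=0$, this reduces the claim to $\int_{M}(\db f,\dd^{*}\omega)\,dV=0$, which after moving the derivative off $f$ is exactly the same reduction to $\dd\db\omega^{n-1}=0$.
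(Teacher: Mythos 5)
Your argument is correct and is in essence the paper's own proof: the paper writes $\int_{M}\Delta f\,dV=(\sqrt{-1}\dd\db f,\omega)_2=(\sqrt{-1}f,\db^*\dd^*\omega)_2$ and then identifies $\db^*\dd^*\omega=0$ with $\dd\db\omega^{n-1}=0$ via the Hodge star, which is exactly your double integration by parts phrased with formal adjoints instead of Stokes' theorem and the identity $\tr_{\omega}\alpha\cdot\frac{\omega^{n}}{n!}=\alpha\wedge\frac{\omega^{n-1}}{(n-1)!}$. Your bidegree bookkeeping is handled correctly, so there is nothing to fix.
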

\begin{proof}
By definition, we have
\begin{align*}
\int_{M}\Delta fdV
&=\int_{M}\tr_{\omega}(\sqrt{-1}\dd\db f)dV
=\int_{M}(\sqrt{-1}\dd\db f,\omega)dV
\\
&=(\sqrt{-1}\dd\db f,\omega)_2=(\sqrt{-1}f,\db^*\dd^*\omega)_2
\end{align*}
Notice
\begin{align*}
\db^*\dd^*\omega=0 \iff *\dd\db*\omega=0 \iff \dd\db\omega^{n-1}=0.
\end{align*}
So we complete our proof.
\end{proof}

\begin{remark}
Gauduchon\cite{Gfrench} proves that any complex manifold admits metrics satisfying $\dd\db\omega^{n-1}=0$.
And in the case of surface, Gauduchon condition is precisely pluriclosed condition.
\end{remark}

\subsection{Laplacians applied to $(p,0)$-forms}
For the purpose of use later, we give the relationship between those Laplacians when applied on $(p,0)$-forms.

\begin{prop}\label{prop-LapForm}
For a $(p,0)$-form $\alpha$, we have
\begin{align*}
\Delta_{\db}\alpha=-\Delta\alpha
			+(-1)^p\sqrt{-1}L^{*}_{\db^*\omega}\db\alpha
\end{align*}
\end{prop}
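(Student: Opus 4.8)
The plan is to compute both $\Delta_{\db}\alpha = \db^*\db\alpha + \db\db^*\alpha$ and the Chern Laplacian $\Delta\alpha = g^{\bar q p}\nabla_p\nabla_{\bar q}\alpha$ in local coordinates for a $(p,0)$-form $\alpha$, and then match the two expressions up to the stated correction term involving $L^*_{\db^*\omega}$. Since $\alpha$ is of type $(p,0)$, the term $\db^*\alpha$ vanishes (there is no $d\bar z$ index to contract against), so $\Delta_{\db}\alpha = \db^*\db\alpha$ reduces to a single composition. First I would write $\db\alpha$, which is a $(p,1)$-form, and then apply Lemma \ref{lem-compDb} with $q=0$ to compute $\db^*(\db\alpha)$, using the simplified formula from the remark following that lemma.

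The key computational steps are as follows. First, express $(\db\alpha)_{i_1\cdots i_p\bar k} = (-1)^p\,\dd_{\bar k}\alpha_{i_1\cdots i_p}$ (with the sign coming from moving $d\bar z^k$ past the $p$ holomorphic factors). Second, feed this into the $q=0$ case of Lemma \ref{lem-compDb}, which gives
\begin{align*}
\db^*\db\alpha
= (-1)^{p+1}g^{\bar k l}\Big(\nabla_l(\db\alpha)_{i_1\cdots i_p\bar k}
+ (\db\alpha)_{i_1\cdots i_p\bar k}\,H_{ls}^{s}\Big)
\end{align*}
as the coefficient of the resulting $(p,0)$-form. Substituting $(\db\alpha)_{i_1\cdots i_p\bar k}=(-1)^p\nabla_{\bar k}\alpha_{i_1\cdots i_p}$ (on $(p,0)$-forms $\nabla_{\bar k}$ agrees with $\dd_{\bar k}$ up to Christoffel terms that I must track carefully), the leading term becomes $-g^{\bar k l}\nabla_l\nabla_{\bar k}\alpha_{i_1\cdots i_p}$, which is precisely $-\Delta\alpha$. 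The remaining term $(-1)^{2p+1}g^{\bar kl}(\db\alpha)$-coefficient$\cdot H_{ls}^s = -g^{\bar k l}\nabla_{\bar k}\alpha_{i_1\cdots i_p}H_{ls}^s$ should be reorganized, using Remark \ref{rm-dbOmega} which identifies $\db^*\omega = \sqrt{-1}H_{is}^s\,dz^i$, together with Lemma \ref{lem-Lfs} giving the local form of $L^*_{\db^*\omega}$ acting on the $(p,1)$-form $\db\alpha$. This should produce exactly the claimed term $(-1)^p\sqrt{-1}\,L^*_{\db^*\omega}\db\alpha$.

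The main obstacle I expect is bookkeeping of signs and index conventions, rather than any conceptual difficulty: reconciling the $(-1)^{p+1}$ prefactor from Lemma \ref{lem-compDb}, the $(-1)^p$ from the exterior derivative, and the $(-1)^p$ buried in the Lefschetz-adjoint formula of Lemma \ref{lem-Lfs} requires care. A secondary subtlety is that $\nabla_l\nabla_{\bar k}$ versus $\nabla_{\bar k}\nabla_l$ differ by curvature, so I must confirm that the covariant derivatives line up to give the Chern Laplacian $\Delta = g^{\bar q p}\nabla_p\nabla_{\bar q}$ in the correct order with no leftover curvature contribution at this stage. Once the two auxiliary lemmas are correctly specialized and their signs aligned, the identity should fall out directly by collecting terms.
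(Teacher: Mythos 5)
Your proposal follows exactly the paper's own proof: note $\db^*\alpha=0$, write $(\db\alpha)_{i_1\cdots i_p\bar k}=(-1)^p\dd_{\bar k}\alpha_{i_1\cdots i_p}$, apply the $q=0$ case of Lemma \ref{lem-compDb}, and convert the torsion-trace term via Remark \ref{rm-dbOmega} and Lemma \ref{lem-Lfs}. The subtleties you flag resolve as you hope (for the Chern connection $\nabla_{\bar k}\alpha=\dd_{\bar k}\alpha$ on $(p,0)$-forms, and the derivatives already appear in the order $\nabla_l\nabla_{\bar k}$ with no curvature correction), so the argument is correct and essentially identical to the paper's.
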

where $L^{*}_{(\cdot)}(\cdot)$ is the Lefschetz-type operator defined in section \ref{sec-Lfs}.

\begin{proof}
In local coordinates, we assume
\begin{align*}
\alpha=\frac{1}{p!}\alpha_{i_1\cdots i_p}
				dz^{i_1}\wedge\cdots\wedge dz^{i_p}
,\qquad \alpha_{\cdots i_u\cdots i_v\cdots}
						+\alpha_{\cdots i_v\cdots i_u\cdots}=0.
\end{align*}
By definition,
\begin{align*}
\db\alpha=\frac{1}{p!}(-1)^p \dd_{\bar{k}}\alpha_{i_1\cdots i_p}
			dz^{i_1}\wedge\cdots\wedge dz^{i_p}
			\wedge d\bar{z}^k.
\end{align*}
Applying Lemma \ref{lem-compDb} to $\db\alpha$, we obtain
\begin{align*}
\db^*\db\alpha
&=-\frac{1}{p!}(g^{\bar{k}l}\nabla_{l}\dd_{\bar{k}}\alpha_{i_1\cdots i_p}
			+g^{\bar{k}l}\dd_{\bar{k}}\alpha_{i_1\cdots i_p}T_{ls}^{s})
			dz^{i_1}\wedge\cdots\wedge dz^{i_p}
\end{align*}
From Lemma \ref{lem-Lfs} and Remark \ref{rm-dbOmega}, we know
\begin{align*}
-\frac{1}{p!}g^{\bar{k}l}\dd_{\bar{k}}\alpha_{i_1\cdots i_p}T_{ls}^{s}
			dz^{i_1}\wedge\cdots\wedge dz^{i_p}
&=\sqrt{-1}\frac{1}{p!}g^{\bar{k}l}
			\dd_{\bar{k}}\alpha_{i_1\cdots i_p}(\db^*\omega)_{l}
			dz^{i_1}\wedge\cdots\wedge dz^{i_p}
\\
&=(-1)^p \sqrt{-1}L^{*}_{\db^*\omega}\db\alpha.
\end{align*}
This implies
\begin{align*}
\Delta_{\db}\alpha=-\Delta\alpha
			+(-1)^p\sqrt{-1}L^{*}_{\db^*\omega}\db\alpha.
\end{align*}
\end{proof}

\section{A Bochner Formula}\label{sec-Bochner}
In this section, we give a Bochner formula for pluriclosed metrics in terms of Bismut Ricci curvature.
As an application, we can obtain some vanishing results on the the Dolbeault cohomology $H_{\db}^{p,0}(M;\mathbb{C})$.
%More analogues vanishing results about Bismut connection can be found in \cite{Alexandrov-Ivanov,Ivanov-P}.

\subsection{Bochner formula}\label{subsec-BochnerFormula}
For ease of notations, we start with some definition.

\begin{definition}\label{def-1eigenvalue}
Given a tensor $A=A_{i\bar{j}}$ satisfying $\overline{A_{i\bar{j}}}=A_{j\bar{i}}$.
We define the first eigenvalue function $\lambda_{*}(A)$ by
\begin{align*}
\lambda_{*}(A)(x)=\min_{0\neq \xi\in T^{1,0}_{x}M}
			\frac{A(x)(\xi,\overline{\xi})}{|\xi|^2}.
\end{align*}
\end{definition}
\begin{remark}\label{re-lambda1}
We choose a normal coordinates at a fixed point $x$ such that
$g_{i\bar{j}}(x)=\delta_{ij}$
and
$A_{i\bar{j}}(x)=\lambda_i\delta_{ij}$.
It is easy to see that
$\lambda_{*}(A)(x)=\min\{\lambda_{1},\cdots,\lambda_{n}\}$.
Thus $A$ is non-negative (resp. positive) if and only if $\lambda_{*}(A)$ is a non-negative (resp. positive) function.
\end{remark}

Now we can state the Bochner formula.
\begin{theorem}\label{the-BochnerRn}
For any tensor $A$, we have
\begin{align*}
\Delta|A|^2=(\D A,A)+(A,\D A)+|\n A|^2+|\nb A|^2-(A,S\circ A)
\end{align*}
where $S$ is the second Chern Ricci and $(\cdot,\cdot)$ denotes the Hermitian inner product.
\end{theorem}

\begin{proof}
By definition and noticing that Chern connection is compatible with metric, we obtain
\begin{align*}
\D|A|^2
&=g^{\bar{q}p}\n_{p}\n_{\bar{q}}g(A,\overline{A})
\\
&=g^{\bar{q}p}\Big\{
	g(\n_{p}\n_{\bar{q}}A,\overline{A})
	+g(A,\n_{p}\n_{\bar{q}}\overline{A})
	+g(\n_{p}A,\n_{\bar{q}}\overline{A})
	+g(\n_{\bar{q}}A,\n_{p}\overline{A})
	\Big\}
\\
&=(\D A,A)+(A,\Db A)+|\n A|^2+|\nb A|^2
\end{align*}
Remark \ref{re-secondOrderChern} tells us that
\begin{align*}
\Db A=\D A-S\circ A,
\end{align*}
which completes this proof.
\end{proof}

From equation \eqref{eq-P11}, we can directly obtain
\begin{corollary}\label{cor-BochnerPLC}
If the metric is pluriclosed, then we have
\begin{align*}
\D|A|^2=(\D A,A)+(A,\D A)
			+|\n A|^2+|\nb A|^2
			-(A,P\circ A)
			-(A,Q\circ A)
\end{align*}
\end{corollary}

\subsection{Some vanishing results}\label{sec-vangeneral}
In this subsection, we apply the Bochner formula established in Section \ref{subsec-BochnerFormula} to get some vanishing results.
Firstly, we state a lemma that will be used repeatedly.

\begin{lemma}\label{lem-positiveQphi}
If $\alpha$ is a $(p,0)$-form and $A=A_{i\bar{j}}$ is Hermitian, then we have
\begin{align*}
-(\alpha,A\circ\alpha)\geq \lambda_{*}(A)|\alpha|^2
\end{align*}
\end{lemma}
\begin{proof}
We check it in a normal coordinates at a point $x$ such that $g_{i\bar{j}}(x)=\delta_{ij}$ and $A_{i\bar{j}}(x)=\lambda_{i}\delta_{ij}$.
From Remark \ref{re-compS} and Remark \ref{re-lambda1}, we know that
\begin{align*}
-(\alpha,A\circ\alpha)(x)
&=-\sum_{i_1,\cdots ,i_p}\alpha_{i_1\cdots i_p}
			\overline{(A\circ\alpha)_{i_1\cdots i_p}}
=\sum_{i_1,\cdots ,i_p}(\sum_{m=1}^{p}\lambda_{i_m})
			\alpha_{i_1\cdots i_p}
			\overline{\alpha_{i_1\cdots i_p}}
\\
&\geq\lambda_{*}(A)(x)\cdot|\alpha|^2(x).
\end{align*}
\end{proof}

\begin{prop}\label{prop-vanRn}
Given a compact complex manifold $(M^{2n},J)$. 
If $M$ admits a metric such that the second Chern Ricci $S$ is positive, then the Dolbeault cohomology $H_{\db}^{p,0}(M;\mathbb{C})$ is trivial for $1\leq p \leq n$.
\end{prop}
\begin{proof}
Given a  $\db$-harmonic $(p,0)$-form $\alpha$.
We have $\db\alpha=\db^*\alpha=0$.
From Proposition \ref{prop-LapForm}, we obtain	
\begin{align*}
\Delta\alpha=-\Delta_{\db}\alpha
			+(-1)^p\sqrt{-1}L^{*}_{\db^*\omega}\db\alpha
=0.
\end{align*}
Applying Theorem \ref{the-BochnerRn} to $\alpha$, we get
\begin{align}\label{eq-vanishingEq}
\Delta|\alpha|^2=|\nabla\alpha|^2+|\overline{\nabla}\alpha|^2
			-(\alpha,S\circ\alpha).
\end{align}
By Lemma \ref{lem-positiveQphi}, we have
\begin{align*}
-(\alpha,S\circ\alpha)\geq\lambda_{*}(S)|\alpha|^2
\end{align*}

Assume $|\alpha|^2$ achieves the maximum at point $x_M$.
By the maximum principle, we have
\begin{align*}
0\geq\Delta|\alpha|^2(x_M)
&=|\nabla\alpha|^2(x_M)
			+|\overline{\nabla}\alpha|^2(x_M)
			-(\alpha,S\circ\alpha)(x_M)
\\
&\geq |\nabla\alpha|^2(x_M)
			+|\overline{\nabla}\alpha|^2(x_M)
			+\lambda_{*}(S)(x_M)\cdot|\alpha|^2(x_M)
\\
&\geq 0
\end{align*}
The last equality uses the positive assumption on $S$.
So we obtain $|\alpha|^2(x_M)=0$, which implies $\alpha=0$.
Since $H_{\db}^{p,0}(M;\mathbb{C})\simeq \ker\Delta_{\db}\Big|_{\Lambda^{p,0}}$, we complete this proof.
\end{proof}

In particular, for $H_{\db}^{n,0}(M;\mathbb{C})$, we only need the assumption that Chern scalar curvature $s=\tr_{\omega}S$ is positive.
\begin{prop}\label{prop-vanScalar}
Given a compact complex manifold $(M^{2n},J)$.
If $M$ admits a metric such that the Chern scalar curvature $s=\tr_{\omega}S$ is positive, then the Dolbeault cohomology $H_{\db}^{n,0}(M;\mathbb{C})$ is trivial.
\end{prop}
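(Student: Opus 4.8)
The plan is to mimic the proof of Proposition~\ref{prop-vanRn} but specialize to the top degree $p=n$, where the curvature term in the Bochner formula of Theorem~\ref{the-BochnerRn} collapses to a multiple of the Chern scalar curvature $s=\tr_{\omega}\Rn$. The key simplification, already isolated in Remark~\ref{re-compS}, is that for an $(n,0)$-form $\alpha$ one has $\Rn[\alpha]=-(\tr_{\omega}\Rn)\,\alpha=-s\,\alpha$, so the pointwise quantity $-(\alpha,\Rn[\alpha])$ equals $s\,|\alpha|^2$ exactly, rather than merely being bounded below by $\lambda_{*}(\Rn)|\alpha|^2$. This means that to run the maximum principle we only need $s>0$, not the full positivity $\Rn>0$.

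First I would recall that $H_{\db}^{n,0}(M;\mathbb{C})\simeq\ker\Delta_{\db}\big|_{\Lambda^{n,0}}$, so it suffices to show every $\db$-harmonic $(n,0)$-form $\alpha$ vanishes. Taking such an $\alpha$ with $\Delta_{\db}\alpha=0$ gives $\db\alpha=\db^*\alpha=0$; since $\alpha$ is an $(n,0)$-form, $\db\alpha$ is automatically zero for degree reasons, and Proposition~\ref{prop-LapForm} then yields
\begin{align*}
\Delta\alpha=-\Delta_{\db}\alpha+(-1)^n\sqrt{-1}\,L^{*}_{\db^*\omega}\db\alpha=0.
\end{align*}
Next I would substitute $\Delta\alpha=0$ into Theorem~\ref{the-BochnerRn} to obtain
\begin{align*}
\Delta|\alpha|^2=|\nabla\alpha|^2+|\overline{\nabla}\alpha|^2-(\alpha,\Rn[\alpha]),
\end{align*}
and then apply Remark~\ref{re-compS} to rewrite the last term as $s\,|\alpha|^2$, giving $\Delta|\alpha|^2=|\nabla\alpha|^2+|\overline{\nabla}\alpha|^2+s\,|\alpha|^2$.

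Finally I would run the maximum principle exactly as in Proposition~\ref{prop-vanRn}: evaluating at a point $x_M$ where $|\alpha|^2$ attains its maximum forces $\Delta|\alpha|^2(x_M)\leq0$, while the right-hand side is a sum of nonnegative terms plus $s(x_M)|\alpha|^2(x_M)$. Since $s>0$ by hypothesis, nonnegativity of the whole expression forces $|\alpha|^2(x_M)=0$, hence $\alpha\equiv0$. The one point requiring genuine care—and the step I expect to be the main (though modest) obstacle—is justifying that $\Rn[\alpha]=-s\,\alpha$ holds as a global tensor identity and not merely at one normalized point; this is exactly the content of Remark~\ref{re-compS}, whose final line asserts $S[\alpha]=-(\tr_{\omega}S)\alpha$ precisely because the fixed point $x$ was arbitrary, so I would invoke that remark directly rather than reprove it. The rest is a verbatim adaptation of the preceding proposition, now with $\Rn$ no longer assumed positive definite.
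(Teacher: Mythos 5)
Your proposal is correct and follows essentially the same route as the paper: the paper's proof consists precisely of invoking Remark \ref{re-compS} to get $-(\alpha,\Rn[\alpha])=s|\alpha|^2$ for an $(n,0)$-form and then repeating the maximum-principle argument of Proposition \ref{prop-vanRn}, which is exactly what you spell out. One harmless slip: $\db\alpha$ is an $(n,1)$-form and is \emph{not} zero for degree reasons (that would be $\dd\alpha$); but this does not matter since $\db$-harmonicity already gives $\db\alpha=0$.
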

\begin{proof}
Remark \ref{re-compS} tells that
\begin{align*}
-(\alpha,S\circ\alpha)=(\alpha,\tr_{g}S\cdot\alpha)
=s|\alpha|^2
\end{align*}
for any $(n,0)$-form $\alpha$.
And the rest argument is similar to Proposition \ref{prop-vanRn}.
\end{proof}

For Gauduchon metric, we can weaken the assumption slightly.

\begin{prop}\label{prop-vanGauduchonRn}
Given a compact complex manifold $(M^{2n},J)$.
If $M$ admits a Gauduchon metric $\omega$ such that $S$ is non-negative and is strictly positive at one point, then the Dolbeault cohomology $H_{\db}^{p,0}(M;\mathbb{C})$ is trivial for $1\leq p \leq n$.
\end{prop}
\begin{proof}
Let $\alpha$ be a $\db$-harmonic $(p,0)$-form.
Integrating equation \eqref{eq-vanishingEq} on $M$ and applying Proposition \ref{prop-GauduchonIntLap=0}, we obtain
\begin{align*}
0=\Vert\nabla\alpha\Vert^2+\Vert\overline{\nabla}\alpha\Vert^2
			+\int_{M}-(\alpha,S\circ\alpha)dV
\end{align*}
where $\Vert\cdot\Vert$ denotes the $L^2$ norm.
Since $-(\alpha,S\circ\alpha)$ is a non-negative function, we get
\begin{align*}
\Vert\nabla\alpha\Vert^2=\Vert\overline{\nabla}\alpha\Vert^2
=\int_{M}-(\alpha,S\circ\alpha)dV=0,
\end{align*}
which means $\nabla\alpha=\overline{\nabla}\alpha=0$.
We claim that $|\alpha|^2$ is a constant function on $M$.
To see this, we take derivatives to $|\alpha|^2$.
\begin{align*}
\dd_i |\alpha|^2=\nabla_{i} (\alpha,\alpha)
=(\nabla_{i}\alpha,\alpha)+(\alpha,\nabla_{\bar{i}}\alpha)=0
\end{align*}
Similarly, we have $\dd_{\bar{i}}|\alpha|^2=0$.

On the other hand, we have
\begin{align*}
0&=\int_{M}-(\alpha,S\circ\alpha)dV
\geq\int_{M}\lambda_{*}(S)|\alpha|^2dV
=|\alpha|^2\int_{M}\lambda_{*}(S)dV
\end{align*}
The integral on the right hand side is a positive number for $\lambda_{*}(S)$ is strictly positive at one point.
So we get $\alpha=0$ and complete this proof.
\end{proof}

Similarly, for $H^{n,0}_{\db}(M;\mathbb{C})$ we only need the assumption on the Chern scalar curvature $s$.
\begin{prop}\label{prop-nDolbeaultGauduchon}
Given a compact complex manifold $(M^{2n},J)$.
If $M$ admits a Gauduchon metric such that the Chern scalar curvature $s$ is non-negative and is strictly positive at one point, then the Dolbeault cohomology $H_{\db}^{n,0}(M;\mathbb{C})$ is trivial.
\end{prop}
\begin{proof}
The argument is similar to Proposition \ref{prop-vanGauduchonRn}.
Firstly, we prove that $|\alpha|^2$ is a constant function by integration.
Then use the equation
$
-(\alpha,S\circ\alpha)=s|\alpha|^2
$
to show that $|\alpha|^2$ is actually $0$.
\end{proof}

For pluriclosed metric, the second Chern Ricci can be represented by Bismut Ricci.
Thus we can state the vanishing result in terms of Bismut Ricci.
In other words, the positivity of the $(1,1)$-part of Bismut Ricci form can give obstructions to the Dolbeault cohomology $H^{p,0}_{\db}(M;\mathbb{C})$.
\begin{theorem}\label{thm-vanGeneral}
Given a compact complex manifold with a pluriclosed metric $(M^{2n},J,\omega)$.

(a) If $\rho^{1,1}$ is strictly positive definite, then $H_{\db}^{p,0}(M;\mathbb{C})$ is trivial for $1\leq p \leq n$;

(b) If Bismut scalar curvature $r=\tr_{\omega}\rho^{1,1}$ is positive, then $H_{\db}^{n,0}(M;\mathbb{C})$ is trivial.
\end{theorem}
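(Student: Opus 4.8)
The plan is to deduce both statements directly from the vanishing results already established for the Chern-type form $\Rn$, namely Proposition \ref{prop-vanRn} and Proposition \ref{prop-vanScalar}, by invoking the pluriclosed identity
\[
\Rn=\rho^{1,1}+\sqrt{-1}H^2
\]
together with the non-negativity of the torsion term $\sqrt{-1}H^2$ recorded in Remark \ref{re-HnonNeg}. The guiding observation is that $\sqrt{-1}H^2$ always enters with the favorable sign, so any positivity hypothesis imposed on the Bismut part $\rho^{1,1}$ is automatically inherited by $\Rn$; this is precisely what makes $\rho^{1,1}$ a legitimate curvature obstruction to $H^{p,0}_{\db}(M;\mathbb{C})$.

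For part (1) I would argue as follows. By Remark \ref{re-HnonNeg} the form $\sqrt{-1}H^2$ is non-negative definite, so $\Rn-\rho^{1,1}=\sqrt{-1}H^2\geq 0$ as a real $(1,1)$-form at every point. Adding a non-negative form can only increase the first eigenvalue, hence $\lambda_{*}(\Rn)\geq \lambda_{*}(\rho^{1,1})$ pointwise in the sense of Definition \ref{def-1eigenvalue}. If $\rho^{1,1}>0$ then $\lambda_{*}(\rho^{1,1})>0$ by Remark \ref{re-lambda1}, so $\Rn>0$, and Proposition \ref{prop-vanRn} immediately gives the vanishing of $H^{p,0}_{\db}(M;\mathbb{C})$ for $1\leq p\leq n$.

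For part (2) I would take traces with respect to $\omega$. Since the eigenvalues of $\sqrt{-1}H^2$ are non-negative we have $\tr_{\omega}(\sqrt{-1}H^2)\geq 0$, and the identity above yields $s=\tr_{\omega}\Rn=r+\tr_{\omega}(\sqrt{-1}H^2)\geq r$. Thus $r>0$ forces the Chern scalar curvature $s$ to be positive, and Proposition \ref{prop-vanScalar} gives the vanishing of $H^{n,0}_{\db}(M;\mathbb{C})$.

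There is no serious obstacle here: the whole content is pinning down the sign of the torsion term, the only substantive input being Remark \ref{re-HnonNeg}. Alternatively, one can give a self-contained proof that bypasses $\Rn$ entirely by running the maximum-principle argument of Proposition \ref{prop-vanRn} directly on the pluriclosed Bochner formula of Theorem \ref{thm-BochnerPLC}. For a $\db$-harmonic $(p,0)$-form $\alpha$ one has $\Delta\alpha=0$ by Proposition \ref{prop-LapForm}, so at a maximum point $x_M$ of $|\alpha|^2$ both extra terms $-(\alpha,\rho^{1,1}[\alpha])$ and $-(\alpha,(\sqrt{-1}H^2)[\alpha])$ are non-negative by Remark \ref{re-compS} and Remark \ref{re-lambda1}, with the first bounded below by $\lambda_{*}(\rho^{1,1})(x_M)|\alpha|^2(x_M)$; the maximum principle then forces $|\alpha|^2(x_M)=0$. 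For $(n,0)$-forms the same computation with $p=n$ replaces the curvature term by $r|\alpha|^2$, yielding part (2).
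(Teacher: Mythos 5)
Your proof is correct and follows exactly the paper's own argument: the paper likewise deduces both parts from Propositions \ref{prop-vanRn} and \ref{prop-vanScalar} via the identity $\Rn=\rho^{1,1}+\sqrt{-1}H^2$ and the non-negativity of $\sqrt{-1}H^2$ from Remark \ref{re-HnonNeg}. Your write-up merely spells out the eigenvalue and trace comparisons that the paper leaves implicit.
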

\begin{proof}
Since equation \eqref{eq-P11} and $Q\geq0$, we can obtain (a) and (b) from Proposition \ref{prop-vanRn} and Proposition \ref{prop-vanScalar}, respectively.
\end{proof}

\section{Bismut Einstein Metrics on Compact Complex Manifolds}\label{sec-BEgen}

In this section, we will discuss the Bismut Einstein metric on compact complex manifolds and use the Bochner formula established in Section \ref{subsec-BochnerFormula} to give a proof of Theorem \ref{the-introMain}.
Firstly, we introduce an observation that plays an important role in the proof of the main theorem.

We assume in local coordinates
\begin{align*}
\rho^{2,0}=\frac{\sqrt{-1}}{2}\phi_{ij}dz^{i}\wedge dz^{j}
\end{align*}
where $\phi_{ij}+\phi_{ji}=0$.
For convenience, we will also use $\phi=-\sqrt{-1}\rho^{2,0}$ to denote the $(2,0)$-part of Bismut Ricci in the following.

\subsection{An observation of Bismut Einstein metrics}

Let us begin with a formula given by authors of \cite{GF-J-SLap}.

\begin{lemma}[\textit{Proposition 3.24 of \cite{GF-J-SLap}}]\label{lem-ChernDiv}
The $(2,0)$-part of Bismut Ricci can be represented by
\begin{align*}
\phi=-\di T,
\end{align*}
where $(\di T)_{ij}=-g^{\bar{k}l}\nabla_{l}T_{ij\bar{k}}$.
\end{lemma}

Now we give an observation about Bismut Einstein metrics.
\begin{prop}\label{prop-observationGeneral}
If $\omega$ is a Bismut Einstein metric with Einstein constant $\lambda$, then we have
\begin{align*}
\D\phi=-\lambda\phi.
\end{align*}
Moreover, in the case of $\lambda\neq0$ we have
\begin{align*}
\lambda T+\nb\phi=0.
\end{align*}
\end{prop}
\begin{proof}
In the case of $\lambda=0$.
Since $\rho=\rho^{2,0}+\overline{\rho^{2,0}}$ is closed, we obtain that $\rho^{2,0}$ is closed.
Applying Lemma \ref{prop-LapForm}, we get
\begin{align*}
\Delta \rho^{2,0}=-\Delta_{\db}\rho^{2,0}=0.
\end{align*}
Recall that $\phi=-\sqrt{-1}\rho^{2,0}$.
So we obtain $\D\phi=0$. 

From now on we assume $\lambda\neq0$.
We have
\begin{align*}
\dd\omega=\frac{1}{\lambda}\dd\rho^{1,1}
=-\frac{1}{\lambda}\db\rho^{2,0}
\end{align*}
In local coordinates, the equation above becomes
\begin{align*}
\frac{\sqrt{-1}}{2}T_{ij\bar{k}}dz^i\wedge dz^j\wedge d\bar{z}^k
=-\frac{1}{\lambda}\frac{\sqrt{-1}}{2}\dd_{\bar{k}}\phi_{ij}
					dz^i\wedge dz^j\wedge d\bar{z}^k
\end{align*}
which implies
\begin{align}\label{eq-HSTorsionphi}
T_{ij\bar{k}}
=-\frac{1}{\lambda}\dd_{\bar{k}}\phi_{ij}
=-\frac{1}{\lambda}\n_{\bar{k}}\phi_{ij}.
\end{align}
Applying Lemma \ref{lem-ChernDiv}, we obtain
\begin{align*}
\phi_{ij}=g^{\bar{k}l}\nabla_{l}T_{ij\bar{k}}
=-\frac{1}{\lambda}g^{\bar{k}l}\n_{l}\n_{\bar{k}}\phi_{ij}
=-\frac{1}{\lambda}\Delta \phi_{ij}
\end{align*}
This completes the proof.
\end{proof}

Next, we state a differential equation which will be used later.

\begin{lemma}\label{lem-LaplacianOf|phi|}
Assume $\omega$ is a Bismut Einstein metric with Einstein constant $\lambda$.
Then we have
\begin{align}\label{eq-LaplacianOf|phi|}
\D|\phi|^2=|\n\phi|^2+|\nb\phi|^2-(\phi,Q\circ\phi)
\end{align}
where $\phi=-\sqrt{-1}\rho^{2,0}$.
\end{lemma}
\begin{proof}
Applying Corollary \ref{cor-BochnerPLC} to $\phi$, we obtain
\begin{align*}
\Delta|\phi|^2=(\D \phi,\phi)+(\phi,\D \phi)
	+|\n \phi|^2+|\nb \phi|^2-(\phi,P\circ \phi)-(\phi,Q\circ \phi).
\end{align*}
From Remark \ref{re-compS} we know
\begin{align*}
-(\phi,P\circ\phi)=-(\phi,(\lambda g)\circ\phi)=2\lambda|\phi|^2
\end{align*}
Then applying Proposition \ref{prop-observationGeneral}, we get
\begin{align*}
\Delta|\phi|^2
&=(-\lambda\phi,\phi)+(\phi,-\lambda\phi)
	+|\n \phi|^2+|\nb \phi|^2+2\lambda|\phi|^2-(\phi,Q\circ \phi)
\\
&=|\n \phi|^2+|\nb \phi|^2-(\phi,Q\circ \phi)
\end{align*}
\end{proof}

\subsection{Proof of Theorem \ref{the-introMain}}
\begin{proof}[Proof of Theorem \ref{the-introMain}]
From Remark \ref{re-HnonNeg} and Lemma \ref{lem-positiveQphi}, we know that
\begin{align*}
-(\phi,Q\circ\phi)\geq 0.
\end{align*}
So Lemma \ref{lem-LaplacianOf|phi|} shows that
\begin{align*}
\D|\phi|^2\geq 0
\end{align*}
Notice that the manifold is compact.
Applying the strong maximum principle to $|\phi|^2$, we obtain that $|\phi|^2$ is a constant function.
Thus equation \eqref{eq-LaplacianOf|phi|} becomes
\begin{align*}
0=|\n\phi|^2+|\nb\phi|^2-(\phi,Q\circ\phi)
\end{align*}
Since the three terms on the right hand side are non-negative, we obtain
\begin{align*}
\n\phi=\nb\phi=0
,\quad
-(\phi,Q\circ\phi)=0.
\end{align*}

{\bf Case I $\lambda\neq0$.}
From Proposition \ref{prop-observationGeneral} we know that
\begin{align*}
T=-\frac{1}{\lambda}\nb\phi=0,
\end{align*}
This shows that the metric is K{\"a}hler and thus K{\"a}hler-Einstein.

{\bf Case II $\lambda=0$.}
By definition \ref{def-circle} and definition \ref{def-SandQ}, we have
\begin{align*}
-(\phi,Q\circ\phi)
&=g^{\bar{s}i}g^{\bar{t}j}\phi_{ij}
	\overline{
		g^{\bar{b}a}(Q_{s\bar{b}}\phi_{at}+Q_{t\bar{b}}\phi_{sa})
	}
=2g^{\bar{s}i}g^{\bar{t}j}g^{\bar{a}b}
	Q_{b\bar{s}}\phi_{ij}\phi_{\bar{a}\bar{t}}
\\
&=2g^{\bar{s}i}g^{\bar{t}j}g^{\bar{a}b}g^{\bar{q}p}g^{\bar{l}k}
	T_{bp\bar{l}}T_{\bar{s}\bar{q}k}\phi_{ij}\phi_{\bar{a}\bar{t}}
\\
&=2g^{\bar{t}j}g^{\bar{q}p}g^{\bar{l}k}
	(g^{\bar{a}b}T_{bp\bar{l}}\phi_{\bar{a}\bar{t}})
	(g^{\bar{s}i}T_{\bar{s}\bar{q}k}\phi_{ij})
\\
&=2g^{\bar{t}j}g^{\bar{q}p}g^{\bar{l}k}
	(g^{\bar{a}b}T_{bp\bar{l}}\phi_{\bar{a}\bar{t}})
	(\overline{g^{\bar{i}s}T_{sq\bar{k}}\phi_{\bar{i}\bar{j}}})
\end{align*}
Notice that the right hand side is the norm of the tensor
$A_{p\bar{l}\bar{t}}=g^{\bar{a}b}T_{bp\bar{l}}\phi_{\bar{a}\bar{t}}$.
Thus we obtain
\begin{align*}
g^{\bar{a}b}T_{bp\bar{l}}\phi_{\bar{a}\bar{t}}=0.
\end{align*}
Differentiating both sides and noticing $\nb\phi=\nb\phi=0$, we get
\begin{align*}
0=\n_{k}(g^{\bar{a}b}T_{bp\bar{l}}\phi_{\bar{a}\bar{t}})
=g^{\bar{a}b}\n_{k}T_{bp\bar{l}}\phi_{\bar{a}\bar{t}}
\end{align*}
Taking trace and applying Lemma \ref{lem-ChernDiv}, we get
\begin{align*}
g^{\bar{a}b}\phi_{bp}\phi_{\bar{a}\bar{t}}=0
\end{align*}
So we obtain $|\phi|^2=0$ by taking trace again.
The proof is completed since $\rho^{2,0}=\sqrt{-1}\phi$.
\end{proof}

\section{The Case of Surface}\label{sec-proofMain}

In section \ref{sec-vangeneral}, we show that for Gauduchon metrics, those vanishing results still hold under a slightly weaker conditions.
And in dimension 2, Gauduchon condition is precisely the pluriclosed condition.
So we can obtain a slightly stronger vanishing result in the case of surfaces.
As an application, we can give another proof of Theorem \ref{the-introMain} in the case of surfaces.

\subsection{A vanishing theorem for surfaces}

Let us begin with an observation in dimension 2.
\begin{lemma}\label{lem-Htwo2}
In dimension 2, we have
\begin{align*}
Q=\frac{1}{2}|T|^2 g
\end{align*}
\end{lemma}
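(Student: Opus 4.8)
The plan is to reduce the identity to a pointwise statement and verify it in a unitary frame, exploiting the antisymmetry of $H$ in its holomorphic indices together with the constraint $n=2$. First I would fix an arbitrary point $x$ and choose coordinates with $g_{i\bar{j}}(x)=\delta_{ij}$, as in Remark \ref{re-HnonNeg}. In such a frame the local expression for $\sqrt{-1}H^2$ collapses to
\begin{align*}
(\sqrt{-1}H^2)_{i\bar{j}}=\sum_{s,k}H_{is\bar{k}}\overline{H_{js\bar{k}}},
\end{align*}
so it suffices to compute this Hermitian matrix and compare it with the matrix $\tfrac{1}{2}|H|^2\delta_{ij}$ representing $\tfrac{1}{2}|H|^2\omega$.

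The key structural input is $H_{ij\bar{t}}=-H_{ji\bar{t}}$, so in complex dimension $2$ the only nonzero components are those with $\{i,j\}=\{1,2\}$; in particular $H_{is\bar{k}}$ vanishes unless $s$ is the index complementary to $i$. I would use this twice. For the off-diagonal entries $i\neq j$, a nonzero summand $H_{is\bar{k}}\overline{H_{js\bar{k}}}$ would require $s\neq i$ and $s\neq j$ simultaneously, which is impossible when $\{i,j\}=\{1,2\}$; hence the off-diagonal part of $\sqrt{-1}H^2$ vanishes. For the diagonal entries, only the summand with $s$ complementary to $i$ survives, giving
\begin{align*}
(\sqrt{-1}H^2)_{1\bar{1}}=\sum_{k}|H_{12\bar{k}}|^2=(\sqrt{-1}H^2)_{2\bar{2}},
\end{align*}
so $\sqrt{-1}H^2$ is automatically a real multiple of $\omega$ at $x$, without even invoking the diagonalization used in Remark \ref{re-HnonNeg}.

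It then remains to identify this common diagonal value with $\tfrac{1}{2}|H|^2$. Expanding $|H|^2$ in the same frame as the squared norm $\sum_{i,j,t}|H_{ij\bar{t}}|^2$ of the skew tensor $H_{ij\bar{t}}$, the antisymmetry in $(i,j)$ makes each unordered pair contribute twice, so $|H|^2=2\sum_{k}|H_{12\bar{k}}|^2$, exactly twice the diagonal value above. Since $x$ is arbitrary and $\omega(x)=\sqrt{-1}\sum_i dz^i\wedge d\bar{z}^i$ in this frame, this yields $\sqrt{-1}H^2=\tfrac{1}{2}|H|^2\omega$ everywhere. The only delicate point is the normalization of $|H|^2$: the factor $\tfrac{1}{2}$ is precisely the double counting coming from the skew pair $(i,j)$, and one can cross-check it against Remark \ref{re-HnonNeg} by comparing traces, since $\tr_{\omega}(\sqrt{-1}H^2)=\sum_{i,s,k}|H_{is\bar{k}}|^2=|H|^2$ must agree with $\tr_{\omega}(\tfrac{1}{2}|H|^2\omega)=\tfrac{1}{2}|H|^2\cdot 2$. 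Thus the main thing to get right is the convention under which $d\omega=\sqrt{-1}H$ is read as a tensor and the induced norm on it; everything else is a short index computation.
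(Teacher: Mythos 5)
Your argument is correct and follows essentially the same route as the paper: fix a point, normalize $g_{i\bar{j}}(x)=\delta_{ij}$, use the antisymmetry $H_{ij\bar{k}}=-H_{ji\bar{k}}$ in complex dimension $2$ to kill the off-diagonal entries and reduce the diagonal ones to $\sum_{k}|H_{12\bar{k}}|^2$, then identify this with $\tfrac{1}{2}|H|^2$ via the double counting over the skew pair $(i,j)$. Your additional trace cross-check is a nice sanity test but not needed; the normalization you adopt for $|H|^2$ matches the paper's.
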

\begin{proof}
We prove it in a normal coordinates such that $g_{i\bar{j}}(x)=\delta_{ij}$ and $Q_{i\bar{j}}(x)=\lambda_{i}\delta_{ij}$.
By definition, we have
\begin{align*}
Q_{1\bar{1}}(x)=\sum_{i,j}T_{1i\bar{j}}T_{\bar{1}\bar{i}j}
=\sum_{j}|T_{12\bar{j}}|^2
\end{align*}
Here we use the fact that the first two subscripts of $T$ are skew-symmetric.
Similarly, we have
\begin{align*}
Q_{2\bar{2}}(x)=\sum_{i,j}T_{2i\bar{j}}T_{\bar{2}\bar{i}j}
=\sum_{j}|T_{21\bar{j}}|^2
\end{align*}
Thus we complete this proof.
\end{proof}

Now we state a vanishing theorem for compact surfaces.
\begin{theorem}\label{thm-introVan2}
Given a compact complex surface with a pluriclosed metric $(M^{4},J,\omega)$.

(a) If $\rho^{1,1}$ is non-negative definite, then either $\omega$ is K{\"a}hler or $H_{\db}^{p,0}(M;\mathbb{C})$ is trivial for $1\leq p \leq 2$;

(b) If Bismut scalar curvature $r=\tr_{\omega}\rho^{1,1}$ is non-negative, then either $\omega$ is K{\"a}hler or $H_{\db}^{2,0}(M;\mathbb{C})$ is trivial.
\end{theorem}
\begin{proof}
Notice that in dimension 2, pluriclosed metrics are Gauduchon metrics.
And from Lemma \ref{lem-Htwo2} we know that
\begin{align*}
S=P+Q=P+\frac{1}{2}|T|^2 g
\end{align*}

If $\omega$ is non-K{\"a}hler, then there exists a point $x$ such that $|T|^2(x)>0$.
So part (a) and part (b) are obtained from Proposition \ref{prop-vanGauduchonRn} and Proposition \ref{prop-nDolbeaultGauduchon}, respectively.
\end{proof}

\subsection{Bismut Einstein metrics on surfaces}

We first give an proposition that is similar to Proposition \ref{prop-observationGeneral}.
\begin{prop}\label{prop-observation}
Given a Bismut Einstein metric with Einstein constant $\lambda$ on a complex surface.
We have
\begin{align*}
\Delta_{\db}\rho^{2,0}=\lambda\rho^{2,0}
\end{align*}
\end{prop}
\begin{proof}
If $\lambda=0$, then we have $\rho=\rho^{2,0}+\overline{\rho^{2,0}}$.
We obtain that $\rho^{2,0}$ is closed for $\rho$ is closed.
Thus $\rho^{2,0}$ is $\db$-harmonic.

If $\lambda\neq0$, then we have
\begin{align}\label{eq-relationshipHS}
0=\dd\rho^{1,1}+\db\rho^{2,0}=\lambda\dd\omega+\db\rho^{2,0}
\end{align}
and $\dd\rho^{2,0}=0$.
On the other hand, we have (see Section \ref{subsec-Bismut})
\begin{align*}
\rho^{2,0}=-\dd\db^*\omega,
\end{align*}
which implies
\begin{align*}
\rho^{2,0}
=\dd*\dd*\omega=\dd*\dd\omega
=-\frac{1}{\lambda}\dd*\db\rho^{2,0}
=-\frac{1}{\lambda}*\dd*\db\rho^{2,0}
=\frac{1}{\lambda}\db^*\db\rho^{2,0}
=\frac{1}{\lambda}\D_{\db}\rho^{2,0}
\end{align*}
The second equality is because of $*\omega=\omega$ in dimension 2.
The third equality uses equation \eqref{eq-relationshipHS}.
And the fourth equality uses the fact that $*\eta=\eta$ for arbitrary $(2,0)$-form in dimension 2, which is an application of Lefschetz theorem to primitive form (see e.g., \cite{Wells}).
\end{proof}

\begin{proof}[Another proof of Theorem \ref{the-introMain} in the case of surfaces.]

For convenience, we denote $\varphi=\rho^{2,0}$.

{\bf Case I $\lambda<0$.}
Using Proposition \ref{prop-observation}, we have
\begin{align*}
(\varphi,\varphi)_2=(\lambda\Delta_{\db}\varphi,\varphi)_2
=\lambda(\db^*\db\varphi,\varphi)_2
=\lambda(\db\varphi,\db\varphi)_2.
\end{align*}
where $(\cdot,\cdot)_2$ is the $L^2$ Hermitian inner product.
So we obtain $\varphi=0$ for $\lambda$ is a negative number.
Notice 
$\lambda\dd\omega=\dd\rho^{1,1}=-\db\varphi=0$, which means that $\omega$ is K{\"a}hler.

{\bf Case II $\lambda=0$.}
From Proposition \ref{prop-observation} we know that $\varphi$ is a $\db$-harmonic $(2,0)$-form. 
Theorem \ref{thm-introVan2} tell us that $\omega$ is K{\"a}hler or $\varphi=0$.
In fact, in both cases we have $\varphi=0$ because for K{\"a}hler metrics, the Bismut Ricci form is precisely the K{\"a}hler Ricci form which only has $(1,1)$-part.

{\bf Case III $\lambda>0$.}
Applying Corollary \ref{cor-BochnerPLC} to $\varphi$, we get
\begin{equation}\label{eq-BochnerMain}
\begin{split}
\D|\varphi|^2
&=(\D \varphi,\varphi)+(\varphi,\D \varphi)
			+|\n \varphi|^2+|\nb \varphi|^2
			-(\varphi,P\circ \varphi)
			-(\varphi,Q\circ \varphi)
\\
&=(\D \varphi,\varphi)+(\varphi,\D \varphi)
			+|\n \varphi|^2+|\nb \varphi|^2
			+(2\lambda+|T|^2)|\varphi|^2
\end{split}
\end{equation}
The second row is because of Proposition \ref{prop-observation} and Lemma \ref{lem-Htwo2}.

Using Proposition \ref{prop-LapForm}, we get
\begin{align*}
(\Delta\varphi,\varphi)_2
=-(\Delta_{\db}\varphi,\varphi)_2
			+\sqrt{-1}(L^{*}_{\db^*\omega}\db\varphi,\varphi)_2
\end{align*}
By the definition of $L^{*}_{(\cdot)}(\cdot)$ (see Section \ref{sec-Lfs}),
\begin{align*}
(L^{*}_{\db^*\omega}\db\varphi,\varphi)_2
&=\int_{M} (L^{*}_{\db^*\omega}\db\varphi,\varphi)dV
=\int_{M} (\db\varphi,\dd^{*}\omega\wedge\varphi)dV
=(\db\varphi,\dd^{*}\omega\wedge\varphi)_2
\end{align*}
By direct computation, we obtain
\begin{align*}
(\db\varphi,\dd^{*}\omega\wedge\varphi)_2
&=(\db^{*}\omega\wedge\bar{\varphi},\dd\bar{\varphi})_2
=\int_{M}\db^{*}\omega\wedge\bar{\varphi}\wedge *\db\varphi
\\
&=-\lambda\int_{M}\db^{*}\omega\wedge\bar{\varphi}\wedge *\dd\omega
=-\lambda\int_{M}\db^{*}\omega\wedge\bar{\varphi}\wedge *\dd*\omega
\\
&=\lambda\int_{M}\db^{*}\omega\wedge\bar{\varphi}\wedge \db^*\omega
=\lambda\int_{M}\db^{*}\omega\wedge \db^*\omega\wedge\bar{\varphi}
\\
&=0
\end{align*}
The second row is because of
$\lambda\dd\omega=\dd\rho^{1,1}=-\db\varphi$.
And the last row uses the fact that $\db^*\omega\wedge\db^*\omega=0$, which holds because $\db^*\omega$ is an odd degree form.
So we obtain
\begin{align*}
(\Delta\varphi,\varphi)_2
=-(\Delta_{\db}\varphi,\varphi)_2=-\lambda\Vert\varphi\Vert^2.
\end{align*}
Similarly, we have
\begin{align*}
(\varphi,\Delta\varphi)_2=-\lambda\Vert\varphi\Vert^2.
\end{align*}
Integrating equation \eqref{eq-BochnerMain} on the manifold and noticing Proposition \ref{prop-GauduchonIntLap=0}, we have
\begin{align*}
0
&=\Vert\n \varphi\Vert^2+\Vert\nb \varphi\Vert^2
			+\int_{M}|T|^2|\varphi|^2dV
\end{align*}
which implies $\nb\varphi=0$.
Since
$
\lambda\dd\omega=\dd\rho^{1,1}=-\db\varphi=-\nb\varphi=0,
$
the metric $\omega$ is K{\"a}hler and $\varphi=0$.
\end{proof}

\section{Appendix}
In this appendix, we give the proof of some lemmas used earlier.

\subsection{Proof of Lemma \ref{lem-exOf3derivative}}

\begin{proof}
Firstly, we recall that for a connection $D$ with torsion $H$ and curvature tensor $\R$, we have
\begin{align}\label{eq-ex2DwithT}
D^{2}_{xy}A-D^{2}_{yx}A=\R_{xy}\circ A-D_{H(x,y)}A
\end{align}
Applying it to Chern connection, we get
\begin{align*}
\n^{3}_{k\bar{k}\bar{l}}A
&=\n_{k}(\n^{2}A)_{\bar{k}\bar{l}}
=\n_{k}(\n^{2}_{\bar{k}\bar{l}}A)
\\
&=\n_{k}(\n^{2}_{\bar{l}\bar{k}}A
	-T_{\bar{k}\bar{l}}^{\bar{s}}\n_{\bar{s}}A)
\\
&=\n^{3}_{k\bar{l}\bar{k}}A
	-\n_{k}T_{\bar{k}\bar{l}}^{\bar{s}}\n_{\bar{s}}A
	-T_{\bar{k}\bar{l}}^{\bar{s}}\n_{k}\n_{\bar{s}}A
\end{align*}
The second row is because of $\Omega_{\bar{k}\bar{l}\cdots}=0$.

Similarly, applying formula \eqref{eq-ex2DwithT} to $\nb A$, we get
\begin{align*}
\n^{3}_{k\bar{l}\bar{k}}A
=\n^{2}_{k\bar{l}}(\nb A)_{\bar{k}}
=\n^{3}_{\bar{l}k\bar{k}}A
	+\Omega_{k\bar{l}}\circ\n_{\bar{k}}A
\end{align*}
Combining above together, we obtain
\begin{align*}
\n^{3}_{k\bar{k}\bar{l}}A-\n^{3}_{\bar{l}k\bar{k}}A
=\Omega_{k\bar{l}}\circ\n_{\bar{k}}A
	-\n_{k}T_{\bar{k}\bar{l}}^{\bar{s}}\n_{\bar{s}}A
	-T_{\bar{k}\bar{l}}^{\bar{s}}\n_{k}\n_{\bar{s}}A
\end{align*}
And we complete the proof.
\end{proof}

\subsection{Proof of Lemma \ref{lem-compDb}}
\begin{proof}
Since the test form $\beta$ can be chosen that is supported in a coordinates neighborhood, we can do calculation locally.
Assume
\begin{align*}
\beta=\beta_{a_1\cdots a_p \bar{b}_1\cdots \bar{b}_q}
				dz^{a_1}\wedge\cdots\wedge dz^{a_p}
				\wedge
				d\bar{z}^{b_1}\wedge\cdots\wedge d\bar{z}^{b_q}.
\end{align*}
We have
\begin{align*}
(\db^*\alpha,\beta)_2
&=\int_{M} (\db^*\alpha,\beta) \det g\\
&=\int_{M} \eta_{i_1\cdots i_p \bar{j}_1\cdots \bar{j}_q}
				\overline{\beta_{a_1\cdots a_p \bar{b}_1\cdots \bar{b}_q}}
				g^{\bar{a}_1 i_1}\cdots g^{\bar{a}_p i_p}
				g^{\bar{j}_1 b_1}\cdots g^{\bar{j}_q b_q}
				\det g		
.		
\end{align*}
Here we use $(\cdot,\cdot)_2$ and $(\cdot,\cdot)$ to denote $L^2$ Hermitian inner product and pointwise Hermitian inner product, respectively.
By direct computation, we get
\begin{align*}
\db\beta=(-1)^{p+q}\dd_{\bar{l}}
				\beta_{a_1\cdots a_p \bar{b}_1\cdots \bar{b}_q}
				dz^{a_1}\wedge\cdots\wedge dz^{a_p}
				\wedge
				d\bar{z}^{b_1}\wedge\cdots\wedge d\bar{z}^{b_q}
				\wedge
				d\bar{z}^l
\end{align*}
Then
\begin{align*}
(\db^*\alpha,\beta)_2
&=(\alpha,\db\beta)_2
=\int_{M} (\alpha,\db\beta)\det g\\
&=(-1)^{p+q}\int_{M} 
			\alpha_{i_1\cdots i_p \bar{j}_1\cdots \bar{j}_q \bar{k}}
			\dd_l \overline{\beta_{a_1\cdots a_p \bar{b}_1\cdots \bar{b}_q}}
			g^{\bar{a}_1 i_1}\cdots g^{\bar{a}_p i_p}
			g^{\bar{j}_1 b_1}\cdots g^{\bar{j}_q b_q}
			g^{\bar{k}l}
			\det g
\\
&=(-1)^{p+q+1}\int_{M} 
			\overline{\beta_{a_1\cdots a_p \bar{b}_1\cdots \bar{b}_q}}
			\dd_l
		\Big(		
			\alpha_{i_1\cdots i_p \bar{j}_1\cdots \bar{j}_q \bar{k}}
			g^{\bar{a}_1 i_1}\cdots g^{\bar{a}_p i_p}
			g^{\bar{j}_1 b_1}\cdots g^{\bar{j}_q b_q}
			g^{\bar{k}l}
			\det g
		\Big).
\end{align*}
The arbitrariness of $\beta$ implies
\begin{align*}
\eta_{i_1\cdots i_p \bar{j}_1\cdots \bar{j}_q}
				g^{\bar{a}_1 i_1}\cdots g^{\bar{a}_p i_p}
				g^{\bar{j}_1 b_1}\cdots g^{\bar{j}_q b_q}
				\det g
&=(-1)^{p+q+1}
\dd_l
		\Big(		
			\alpha_{i_1\cdots i_p \bar{j}_1\cdots \bar{j}_q \bar{k}}
			g^{\bar{a}_1 i_1}\cdots g^{\bar{a}_p i_p}
\\
&\qquad\qquad
			g^{\bar{j}_1 b_1}\cdots g^{\bar{j}_q b_q}
			g^{\bar{k}l}
			\det g
		\Big)
\end{align*}
which gives
\begin{align*}
(-1)^{p+q+1}\eta_{c_1\cdots c_p \bar{d}_1\cdots \bar{d}_q}
&=g^{\bar{k}l}\dd_l 
						\alpha_{c_1\cdots c_p \bar{d}_1\cdots \bar{d}_q \bar{k}}
			+g^{\bar{k}l}\sum_{m=1}^{p}
						\alpha_{c_1\cdots i_m \cdots c_p \bar{d}_1\cdots \bar{d}_q \bar{k}}
						g_{c_m \bar{a}_m}\dd_l g^{\bar{a}_m i_m} 
\\
	&\qquad+
						g^{\bar{k}l}\sum_{m=1}^{q}
			\alpha_{c_1\cdots c_p\bar{d}_1\cdots\bar{j}_m\cdots \bar{d}_q\bar{k}}
						g_{b_m \bar{d}_m}\dd_l g^{\bar{j}_m b_m}
\\
	&\qquad+
			\alpha_{c_1\cdots c_p \bar{d}_1\cdots \bar{d}_q \bar{k}}
					\dd_{l}g^{\bar{k}l}
			+g^{\bar{k}l}\alpha_{c_1\cdots c_p \bar{d}_1\cdots \bar{d}_q \bar{k}}
					g^{\bar{t}s}\dd_{l}g_{s\bar{t}}
\\
\end{align*}
Recall the derivative formula of inverse matrix
\begin{align*}
\dd_{l}g^{\bar{j}i}=-g^{\bar{j}p}g^{\bar{q}i}\dd_{l}g_{p\bar{q}}
=-g^{\bar{j}p}\Gamma^{i}_{lp}
\end{align*}
Changing some subscripts, we get
\begin{align*}
(-1)^{p+q+1}\eta_{c_1\cdots c_p \bar{d}_1\cdots \bar{d}_q}
&=g^{\bar{k}l}\dd_l 
						\alpha_{c_1\cdots c_p \bar{d}_1\cdots \bar{d}_q \bar{k}}
			-g^{\bar{k}l}\sum_{m=1}^{p}
						\alpha_{c_1\cdots s \cdots c_p \bar{d}_1\cdots \bar{d}_q \bar{k}}
						\Gamma^{s}_{lc_m} 
\\
	&\qquad-
						g^{\bar{k}l}g^{\bar{t}s}\sum_{m=1}^{q}
			\alpha_{c_1\cdots c_p\bar{d}_1\cdots\bar{t}\cdots \bar{d}_q\bar{k}}				
						\dd_l g_{s\bar{d}_m}
\\
	&\qquad-
			\alpha_{c_1\cdots c_p \bar{d}_1\cdots \bar{d}_q \bar{k}}
					g^{\bar{k}l}g^{\bar{t}s}\dd_{s}g_{l\bar{t}}
			+\alpha_{c_1\cdots c_p \bar{d}_1\cdots \bar{d}_q \bar{k}}
					g^{\bar{k}l}g^{\bar{t}s}\dd_{l}g_{s\bar{t}}
\\
\end{align*}
Notice that
\begin{align*}
-g^{\bar{k}l}g^{\bar{t}s}\sum_{m=1}^{q}
			\alpha_{c_1\cdots c_p\bar{d}_1\cdots\bar{t}\cdots \bar{d}_q\bar{k}}				
			\dd_l g_{s\bar{d}_m}
&=-\frac{1}{2}g^{\bar{k}l}g^{\bar{t}s}\sum_{m=1}^{q}
			\alpha_{c_1\cdots c_p\bar{d}_1\cdots\bar{t}\cdots \bar{d}_q\bar{k}}
			(\dd_l g_{s\bar{d}_m}-\dd_s g_{l\bar{d}_m})
\\
&=\frac{1}{2}g^{\bar{k}l}g^{\bar{t}s}\sum_{m=1}^{q}
			\alpha_{c_1\cdots c_p\bar{d}_1\cdots\bar{t}\cdots \bar{d}_q\bar{k}}
			T_{sl\bar{d}_m}
\end{align*}
Then we obtain
\begin{align*}
(-1)^{p+q+1}\eta_{c_1\cdots c_p \bar{d}_1\cdots \bar{d}_q}
&=g^{\bar{k}l}\nabla_l 
						\alpha_{c_1\cdots c_p \bar{d}_1\cdots \bar{d}_q \bar{k}} 
		+\frac{1}{2}g^{\bar{k}l}g^{\bar{t}s}\sum_{m=1}^{q}
					\alpha_{c_1\cdots c_p\bar{d}_1\cdots\bar{t}\cdots \bar{d}_q\bar{k}}
					T_{sl\bar{d}_m}
\\
	&\qquad+
			g^{\bar{k}l}\alpha_{c_1\cdots c_p \bar{d}_1\cdots \bar{d}_q \bar{k}}
			T_{ls}^{s}
\end{align*}
\end{proof}

\bibliographystyle{plain}
\bibliography{beccs}

\begin{thebibliography}{10}

\bibitem{Alexandrov-Ivanov}
Bogdan Alexandrov and Stefan Ivanov.
\newblock Vanishing theorems on {H}ermitian manifolds.
\newblock {\em Differential Geom. Appl.}, 14(3):251--265, 2001.

\bibitem{BHPA}
Wolf~P. Barth, Klaus Hulek, Chris A.~M. Peters, and Antonius Van~de Ven.
\newblock {\em Compact complex surfaces}, volume~4 of {\em Ergebnisse der
  Mathematik und ihrer Grenzgebiete. 3. Folge. A Series of Modern Surveys in
  Mathematics [Results in Mathematics and Related Areas. 3rd Series. A Series
  of Modern Surveys in Mathematics]}.
\newblock Springer-Verlag, Berlin, second edition, 2004.

\bibitem{B-bc}
Jean-Michel Bismut.
\newblock A local index theorem for non-{K}\"{a}hler manifolds.
\newblock {\em Math. Ann.}, 284(4):681--699, 1989.

\bibitem{C-cc}
Shiing-shen Chern.
\newblock Characteristic classes of {H}ermitian manifolds.
\newblock {\em Ann. of Math. (2)}, 47:85--121, 1946.

\bibitem{GF-J-SLap}
Mario {Garcia-Fernandez}, Joshua {Jordan}, and Jeffrey {Streets}.
\newblock {Non-K{\"a}hler Calabi-Yau geometry and pluriclosed flow}.
\newblock {\em arXiv e-prints}, page arXiv:2106.13716, June 2021.

\bibitem{Gfrench}
Paul Gauduchon.
\newblock Le th\'{e}or\`eme de l'excentricit\'{e} nulle.
\newblock {\em C. R. Acad. Sci. Paris S\'{e}r. A-B}, 285(5):A387--A390, 1977.

\bibitem{Ivanov-P}
S.~Ivanov and G.~Papadopoulos.
\newblock Vanishing theorems and string backgrounds.
\newblock {\em Classical Quantum Gravity}, 18(6):1089--1110, 2001.

\bibitem{M-Tperelman}
John Morgan and Gang Tian.
\newblock {\em The geometrization conjecture}, volume~5 of {\em Clay
  Mathematics Monographs}.
\newblock American Mathematical Society, Providence, RI; Clay Mathematics
  Institute, Cambridge, MA, 2014.

\bibitem{Perelman1}
Grisha {Perelman}.
\newblock {The entropy formula for the Ricci flow and its geometric
  applications}.
\newblock {\em arXiv Mathematics e-prints}, page math/0211159, November 2002.

\bibitem{Sscs}
Jeffrey Streets.
\newblock Classification of solitons for pluriclosed flow on complex surfaces.
\newblock {\em Math. Ann.}, 375(3-4):1555--1595, 2019.

\bibitem{MR4181011}
Jeffrey Streets.
\newblock Pluriclosed flow and the geometrization of complex surfaces.
\newblock In {\em Geometric analysis---in honor of {G}ang {T}ian's 60th
  birthday}, volume 333 of {\em Progr. Math.}, pages 471--510.
  Birkh\"{a}user/Springer, Cham, [2020] \copyright 2020.

\bibitem{S-Tpcf}
Jeffrey Streets and Gang Tian.
\newblock A parabolic flow of pluriclosed metrics.
\newblock {\em Int. Math. Res. Not. IMRN}, (16):3101--3133, 2010.

\bibitem{S-Thcf}
Jeffrey Streets and Gang Tian.
\newblock Hermitian curvature flow.
\newblock {\em J. Eur. Math. Soc. (JEMS)}, 13(3):601--634, 2011.

\bibitem{MR2755684}
Jeffrey Streets and Gang Tian.
\newblock Regularity theory for pluriclosed flow.
\newblock {\em C. R. Math. Acad. Sci. Paris}, 349(1-2):1--4, 2011.

\bibitem{T-Zkrs}
Gang Tian and Xiaohua Zhu.
\newblock Uniqueness of {K}\"{a}hler-{R}icci solitons.
\newblock {\em Acta Math.}, 184(2):271--305, 2000.

\bibitem{Wells}
Raymond~O. Wells, Jr.
\newblock {\em Differential analysis on complex manifolds}, volume~65 of {\em
  Graduate Texts in Mathematics}.
\newblock Springer, New York, third edition, 2008.
\newblock With a new appendix by Oscar Garcia-Prada.

\bibitem{Ypcfhs}
Yanan {Ye}.
\newblock {Pluriclosed flow and Hermitian-symplectic structures}.
\newblock {\em arXiv e-prints}, page arXiv:2207.12643, July 2022.

\end{thebibliography}

\end{document}